\documentclass{article}
\pdfoutput=1
\usepackage{arxiv}
\usepackage[utf8]{inputenc} 
\usepackage[T1]{fontenc}    
\usepackage{amsmath,amssymb,amsthm}
\usepackage{stmaryrd}
\usepackage{mathtools}
\usepackage{enumitem}
\usepackage{booktabs}
\usepackage{hyperref}
\hypersetup{
	hidelinks,
	linktoc = all,         
	pdfdisplaydoctitle,    
	breaklinks,
	pdfstartview = Fit,
	unicode,
	pdftitle={A vector logic for intensional formal semantics},
	pdfsubject={vector logic, formal semantics, distributional semantics},
	pdfauthor={Daniel Quigley},
	pdfcreator = {LuaLaTeX},
	pdfkeywords={formal semantics, distributional semantics, intensional model, semantic space, vector logic, mathematical linguistics},
	pdfproducer={Overleaf},
}
\usepackage[
activate={true,nocompatibility},
final,
tracking=true,
factor=1100,
stretch=10,
shrink=10
]{microtype}
\usepackage{doi}
\usepackage{array}
\usepackage{cleveref}
\usepackage{algorithm}
\usepackage{algpseudocode}
\usepackage[most]{tcolorbox}
\usepackage{tikz}			                
\usepackage{tikz-qtree}			                
\usetikzlibrary{shapes.geometric,arrows.meta,arrows,3d,tikzmark,intersections,scopes,shapes,shapes.multipart,calc,positioning,fit,bending,backgrounds,decorations.markings,cd,babel}
\usepackage{mathtools}

\newcommand{\IM}{\operatorname{im}}

\definecolor{boxcolor1}{RGB}{255,254,230}	
\definecolor{boxcolor2}{RGB}{252,227,215}	
\definecolor{boxcolor3}{RGB}{225,240,227}	
\definecolor{boxcolor4}{RGB}{227,235,246}	

\newtcolorbox{definitionbox}[1][]{
	breakable,
	before skip=1.5\topskip,
	after skip=1.5\topskip,
	left skip=0pt,
	right skip=0pt,
	left=4pt,
	right=4pt,
	top=2pt,
	bottom=2pt,
	lefttitle=4pt,
	righttitle=4pt,
	toptitle=2pt,
	bottomtitle=2pt,
	sharp corners,
	boxrule=0pt,
	titlerule=.4pt,
	colback=boxcolor1,
	colbacktitle=boxcolor1,
	coltitle=black,
	colframe=darkgray,
	coltext=black,
	fonttitle=\bfseries,
	title=Definition~\thetcbcounter:,
	#1
}

\newtcolorbox{theorembox}[1][]{
	breakable,
	before skip=1.5\topskip,
	after skip=1.5\topskip,
	left skip=0pt,
	right skip=0pt,
	left=4pt,
	right=4pt,
	top=2pt,
	bottom=2pt,
	lefttitle=4pt,
	righttitle=4pt,
	toptitle=2pt,
	bottomtitle=2pt,
	sharp corners,
	boxrule=0pt,
	titlerule=.4pt,
	colback=boxcolor2,
	colbacktitle=boxcolor2,
	coltitle=black,
	colframe=darkgray,
	coltext=black,
	fonttitle=\bfseries,
	title=Theorem~\thetcbcounter:,
	#1
}

\newtcolorbox{lemmabox}[1][]{
	breakable,
	before skip=1.5\topskip,
	after skip=1.5\topskip,
	left skip=0pt,
	right skip=0pt,
	left=4pt,
	right=4pt,
	top=2pt,
	bottom=2pt,
	lefttitle=4pt,
	righttitle=4pt,
	toptitle=2pt,
	bottomtitle=2pt,
	sharp corners,
	boxrule=0pt,
	titlerule=.4pt,
	colback=boxcolor3,
	colbacktitle=boxcolor3,
	coltitle=black,
	colframe=darkgray,
	coltext=black,
	fonttitle=\bfseries,
	title=Lemma~\thetcbcounter:,
	#1
}

\newtcolorbox{propositionbox}[1][]{
	breakable,
	before skip=1.5\topskip,
	after skip=1.5\topskip,
	left skip=0pt,
	right skip=0pt,
	left=4pt,
	right=4pt,
	top=2pt,
	bottom=2pt,
	lefttitle=4pt,
	righttitle=4pt,
	toptitle=2pt,
	bottomtitle=2pt,
	sharp corners,
	boxrule=0pt,
	titlerule=.4pt,
	colback=boxcolor3,
	colbacktitle=boxcolor3,
	coltitle=black,
	colframe=darkgray,
	coltext=black,
	fonttitle=\bfseries,
	title=Proposition~\thetcbcounter:,
	#1
}

\newtcolorbox{corollarybox}[1][]{
	breakable,
	before skip=1.5\topskip,
	after skip=1.5\topskip,
	left skip=0pt,
	right skip=0pt,
	left=4pt,
	right=4pt,
	top=2pt,
	bottom=2pt,
	lefttitle=4pt,
	righttitle=4pt,
	toptitle=2pt,
	bottomtitle=2pt,
	sharp corners,
	boxrule=0pt,
	titlerule=.4pt,
	colback=boxcolor4,
	colbacktitle=boxcolor4,
	coltitle=black,
	colframe=darkgray,
	coltext=black,
	fonttitle=\bfseries,
	title=Corollary~\thetcbcounter:,
	#1
}

\newtheoremstyle{mystyle}
{3pt}
{3pt}
{\itshape}
{}
{\bfseries}
{.}
{.5em}
{}

\theoremstyle{mystyle}

\newtheorem{example}{Example}[section]

\newtheorem{definition}{Definition}[section] 

\renewenvironment{definition}{%
	\refstepcounter{definition}
	\begin{definitionbox}[title=Definition~\thedefinition]
	}{%
	\end{definitionbox}
}

\newtheorem{theorem}{Theorem}[section] 

\renewenvironment{theorem}{%
	\refstepcounter{theorem}
	\begin{theorembox}[title=Theorem~\thetheorem]
	}{%
	\end{theorembox}
}

\newtheorem{lemma}{Lemma}[section] 

\renewenvironment{lemma}{%
	\refstepcounter{lemma}
	\begin{lemmabox}[title=Lemma~\thelemma]
	}{%
	\end{lemmabox}
}

\newtheorem{proposition}{Proposition}[section] 

\renewenvironment{proposition}{%
	\refstepcounter{proposition}
	\begin{propositionbox}[title=Proposition~\theproposition]
	}{%
	\end{propositionbox}
}

\newtheorem{corollary}{Corollary}[section] 

\renewenvironment{corollary}{%
	\refstepcounter{corollary}
	\begin{corollarybox}[title=Corollary~\thecorollary]
	}{%
	\end{corollarybox}
}

\newtheorem*{remark}{Remark}
\renewenvironment{proof}{{\noindent\bfseries Proof:}}{\qed}

\newtheoremstyle{claimstyle} 
{3pt} 
{3pt} 
{} 
{} 
{\bfseries} 
{.} 
{0.5em} 
{} 

\theoremstyle{claimstyle}

\newtheoremstyle{postulatestyle} 
{3pt} 
{3pt} 
{} 
{} 
{\bfseries} 
{.} 
{0.5em} 
{} 

\theoremstyle{postulatestyle}

\newtheoremstyle{questionstyle} 
{3pt} 
{3pt} 
{} 
{} 
{\bfseries} 
{.} 
{0.5em} 
{} 

\theoremstyle{questionstyle}

\newtheoremstyle{assumptionstyle} 
{3pt} 
{3pt} 
{} 
{} 
{\bfseries} 
{.} 
{0.5em} 
{} 

\theoremstyle{assumptionstyle}

\newtheoremstyle{conjecturestyle} 
{3pt} 
{3pt} 
{} 
{} 
{\bfseries} 
{.} 
{0.5em} 
{} 

\theoremstyle{conjecturestyle}

\newtheoremstyle{critiquestyle} 
{3pt} 
{3pt} 
{\itshape} 
{} 
{\bfseries} 
{.} 
{0.5em} 
{} 

\theoremstyle{critiquestyle}

\newtheoremstyle{responsestyle} 
{3pt} 
{3pt} 
{\itshape} 
{} 
{\bfseries} 
{.} 
{0.5em} 
{} 

\theoremstyle{responsestyle}

\title{A vector logic for intensional formal semantics}

\author{ \href{https://orcid.org/0009-0004-7957-1806}{\includegraphics[scale=0.06]{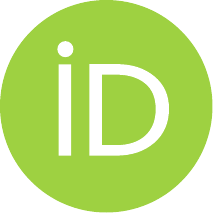}\hspace{1mm}Daniel Quigley} \\
	Center for Possible Minds\\
	Indiana University Bloomington\\
	Bloomington, IN 47408 \\
	\texttt{dgquigle@iu.edu} \\
}

\renewcommand{\headeright}{Preprint}
\renewcommand{\undertitle}{Preprint}
\renewcommand{\shorttitle}{A vector logic for intensional formal semantics}
\colorlet{linecol}{cyan!90!blue!90!black}
\colorlet{fillcol}{cyan!60!blue!80!black!40}
\hypersetup{
pdftitle={A vector logic for intensional formal semantics},
pdfsubject={linguistics, mathematics, logic},
pdfauthor={Daniel Quigley},
pdfkeywords={formal semantics, distributional semantics, intensional model, semantic space, vector logic, mathematical linguistics},
}

\begin{document}
\maketitle

\begin{abstract}
	Formal semantics and distributional semantics are distinct approaches to linguistic meaning: the former models meaning as reference via model-theoretic structures; the latter represents meaning as vectors in high-dimensional spaces shaped by usage. This paper proves that these frameworks are structurally compatible for intensional semantics. We establish that Kripke-style intensional models embed injectively into vector spaces, with semantic functions lifting to (multi)linear maps that preserve composition. The construction accommodates multiple index sorts (worlds, times, locations) via a compound index space, representing intensions as linear operators. Modal operators are derived algebraically: accessibility relations become linear operators, and modal conditions reduce to threshold checks on accumulated values. For uncountable index domains, we develop a measure-theoretic generalization in which necessity becomes truth almost everywhere and possibility becomes truth on a set of positive measure, a non-classical logic natural for continuous parameters.
	
\end{abstract}

\keywords{formal semantics \and distributional semantics \and intensional model \and semantic space \and vector logic \and mathematical linguistics}

\section{Introduction}\label{sec:introduction}

Formal semantics and distributional semantics are two mathematically distinct frameworks for representing linguistic meaning. The formal approach, rooted in Frege, Tarski, and Montague \cite{Montague1974,Tarski1931}, models meaning as reference: linguistic expressions map to truth values via model-theoretic structures \cite{Mendelson1997,Chang1977}, denotation of an expression is its correspondence with entities, relations, or truth values in a model; the distributional approach, following Harris's hypothesis \cite{harris1979mathematical} and Firth's dictum that ``you shall know a word by the company it keeps'' \cite{firth2957}, represents meaning through vectors in high-dimensional spaces, where semantic relationships emerge from contextual co-occurrence patterns \cite{Lenci2022,Boleda2020} and conceptual similarity, rather than referential grounding \cite{Gardenfors2014,Marcolli2023,Gardenfors2000}. Humans engage both modes of semantic processing: logical inference over propositional content and relational reasoning over graded similarity. This capacity motivates the question of whether formal and distributional frameworks admit principled coherence.

This paper establishes that such a connection exists for intensional semantics, the extension of formal semantics that handles modality, tense, and propositional attitudes, by interpreting expressions relative to indices such as possible worlds, times, and locations. Building on the extensional homomorphism theorem of \cite{quigley2025}, we prove that Kripke-style intensional models embed injectively into vector spaces via (multi)linear maps that preserve semantic composition. The construction represents intensions as linear operators from index spaces to extensional denotation spaces, and derives modal operators $\Box$ (necessity) and $\Diamond$ (possibility) algebraically: accessibility relations become linear operators, and modal conditions reduce to nonlinear threshold comparisons on accumulated values. For continuous index domains, we develop a measure-theoretic generalization, in which ``for all'' becomes ``almost everywhere'' and ``there exists'' becomes ``on a set of positive measure''. In this way, the homomorphism guarantees that these perspectives cohere: formal semantics provides truth conditions and logical structure; distributional semantics provides graded similarity and empirical grounding.

This paper is arranged as follows. Section~\ref{sec:priorwork} surveys prior work in compositional distributional semantics and positions the present contribution. Section~\ref{sec:formalanddistr} fixes notation for formal and distributional semantics, and clarifies how the homomorphism relates to empirical word embeddings. Section~\ref{sec:extensional} reviews the extensional homomorphism theorem of \cite{quigley2025}, establishing the foundation for the intensional extension. Section~\ref{sec:intensional} develops the intensional model, introducing multiple index sorts, compound index spaces, and the interpretation of intensions as functions from indices to extensions. Section~\ref{sec:homomorphism} proves the main result: intensional domains embed injectively into vector spaces; semantic functions lift to (multi)linear maps; composition is preserved. Section~\ref{sec:modalops} derives the modal operators $\Box$ and $\Diamond$ algebraically, accounting for finite, countable, and uncountable index domains, and extends the framework to temporal and spatial modalities. Section~\ref{sec:conclusion} summarizes contributions, acknowledges limitations, and identifies directions for future work.

\section{Prior work}\label{sec:priorwork}

Prior work in compositional distributional semantics has progessed in various ways, each with characteristic limitations. The DisCoCat framework \cite{Coecke2010} functorially maps pregroup grammar to finite-dimensional vector spaces via compact-closed categories, with subsequent extensions accommodating the Lambek calculus \cite{Lambek1958,Lambek1961} to handle structural ambiguity \cite{Coecke2013}. However, compact closure imposes the constraint $V \cong V^*$, which holds only for finite-dimensional spaces, and forces linguistically distinct parses into single morphisms; moreover, finite-dimensional targets cannot embed domains of uncountable cardinality \cite{quigley2025}. Domain-theoretic approaches \cite{Burnistov2023} establish correspondences through continuous approximation, while vector lattice models encode Kripke frames as $K$-algebras \cite{Greco2020,Palmigiano2024}, though neither addresses the interface with compositional semantics nor distributional representations directly. Usage-based composition models \cite{MitchellLapata2008,Baroni2014} and neural approaches capture distributional similarity and priming effects, but provide no formal guarantee that logical entailments are preserved. \cite{Mao2022} demonstrate that distributional models with spreading activation outperform neural networks on compositional generalization tasks; \cite{Amigo2022} establish information-theoretic properties for compositional distributional semantics that improve textual similarity performance. These empirical successes leave open the question of structural correspondence with formal semantics.

The foundational substrate enabling such correspondence is vector logic: the embedding of Boolean operations as matrices acting on vectorized truth values. This algebraic logic, developed by Mizraji \cite{Mizraji1992,Mizraji1996,Mizraji2008} and elaborated upon by Westphal and Hardy \cite{Westphal2005}, provides the mechanism by which truth-functional semantics transfers to linear-algebraic representation: logical connectives become matrix operations; truth values become basis vectors; semantic composition becomes matrix-vector multiplication. The extensional homomorphism theorem of \cite{quigley2025} exploits this substrate to embed typed domains and functions injectively into vector spaces with (multi)linear maps preserving composition.

The present work addresses the limitations identified above by extending to intensional semantics. We use $\operatorname{Hom}$ spaces rather than requiring compact closure, thereby accommodating infinite-dimensional Hilbert spaces and avoiding the collapse of distinct structures into identical morphisms. The homomorphism guarantees that semantic functions lift to (multi)linear maps with commuting diagrams, so that entailment relations are preserved and distinct meanings receive distinct representations, properties absent from purely usage-based approaches. By representing accessibility relations as operators and deriving modal conditions algebraically, we provide what $K$-algebra approaches lack: an interface between Kripke semantics and compositional distributional representation.

\section{Formal and distributional semantics}\label{sec:formalanddistr}

We briefly fix notation for both frameworks before establishing their connection.

A \textbf{formal system} $\mathcal{M}_{\mathcal{FS}} = \langle \mathcal{FL}, \models, \vdash_{\mathcal{C}} \rangle$ consists of a formal language $\mathcal{FL}$ of well-formed formulas, a model-theoretic entailment relation $\models$, and proof-theoretic derivability $\vdash_{\mathcal{C}}$ in a calculus $\mathcal{C}$. The application to natural language proceeds by translating a fragment $\mathcal{NL}$ into $\mathcal{FL}$, then interpreting formulas relative to a model $\mathcal{M}$: for any well-formed formula $\varphi$, the denotation $\llbracket \varphi \rrbracket^{\mathcal{M}}$ assigns a semantic value. We adopt the Heim-Kratzer framework \cite{vonFintel2023,Heim1998}, employing simply-typed lambda calculus \cite{Church1940} as the chosen calculus $\mathcal{C}$.

The distributional approach represents meaning as vectors in high-dimensional space, grounded in the hypothesis that words with similar contexts have similar meanings \cite{firth2957,harris1979mathematical}. This approach has roots in linguistics, psychology, and computer science, with applications spanning cognitive science \cite{Gardenfors2000}, machine learning \cite{Bishop2006,Pavlick2022}, and computational linguistics \cite{Lenci2023,Lenci2022}.

\begin{definition}[Semantic space]\label{def:semspace}
A \textbf{semantic space} $\mathcal{S} = \langle \mathcal{V}, \langle \cdot | \cdot \rangle \rangle$ is a Hilbert space: a real or complex inner product space complete with respect to the induced norm $\|\mathbf{v}\| = \sqrt{\langle \mathbf{v} | \mathbf{v} \rangle}$.
\end{definition}

In prose, the vector semantic model is a structure:
$$\mathcal{M}_{\mathcal{S}} = \left\langle \begin{array}{c} \text{meaning space} \end{array}, \begin{array}{c} \text{similarity metric} \end{array} \right\rangle.$$

Word vectors are learned from corpora via co-occurrence statistics \cite{Bullinaria2012}, neural methods \cite{mikolov2013efficientestimationwordrepresentations,pennington2014glove}, or matrix factorization \cite{Arora2012}, such that semantic similarity corresponds to vector proximity (typically taken to be cosine similarity).

The homomorphism we establish does not operate on word vectors directly; rather, on typed semantic domains. The connection is as follows. In formal semantics, the interpretation function $\mathcal{I}$ assigns to each lexical constant $c$ of type $\tau$ a denotation $\mathcal{I}(c) \in \mathcal{D}_\tau$. For extensional semantics, a proper name denotes an entity in $\mathcal{D}_e$; a sentence denotes a truth value in $\mathcal{D}_t$; a predicate denotes a function in $\mathcal{D}_{\langle e, t \rangle}$. Our construction embeds these typed domains into vector spaces: entities map to basis vectors in $\mathcal{S}_{\mathcal{D}_e}$; truth values map to orthonormal vectors in $\mathcal{S}_{\mathcal{D}_t} = \mathbb{R}^2$; functions map to linear operators in $\operatorname{Hom}$ spaces. The embedding $h_e: \mathcal{D}_e \to \mathcal{S}_{\mathcal{D}_e}$ sends each entity to a distinct basis vector; this is compatible with, but not identical to, empirically learned word embeddings.

The compatibility is this: if one instantiates $\mathcal{S}_{\mathcal{D}_e}$ as a corpus-derived embedding space and defines $h_e$ by mapping each entity to its learned vector (assuming injectivity), then the homomorphism guarantees that compositional operations defined model-theoretically transfer to the embedding space. The abstract construction establishes the structural correspondence; empirical instantiation is a separate task requiring that learned representations satisfy the injectivity condition, that distinct entities receive distinct vectors.

\section{Vector logic for extensional formal semantics}\label{sec:extensional}

The extensional model of formal semantics is a typed structure $\mathcal{M}_{\text {ext }}=\left\langle\left(\mathcal{D}_\tau\right)_{\tau \in \mathcal{T}}, \mathcal{I}\right\rangle$, where $\left(\mathcal{D}_\tau\right)_{\tau \in \mathcal{T}}$ is an indexed family of domains for each type $\tau$, and $\mathcal{I}$ is an interpretation function assigning semantic values to non-logical constants. The primitive types are $e$ (entities) and $t$ (truth values $\{0,1\}$). Complex types are constructed via the function type constructor: if $\alpha$ and $\beta$ are types, then $\langle\alpha, \beta\rangle$ is the type of functions from $\alpha$ to $\beta$. The vector space model $\mathcal{M}_{\mathcal{S}}$ is a semantic space, a Hilbert space $\mathcal{S}=\langle\mathcal{V},\langle\cdot \mid \cdot\rangle\rangle$, where vectors represent semantic entities, and the inner product induces similarity via cosine distance. The following theorem establishes the central compatibility result \cite{quigley2025}, answering: does there exist a structure-preserving mapping from $\mathcal{M}_{\text {ext }}$ to $\mathcal{M}_{\mathcal{S}}$; specifically, can semantic functions in the extensional model be realized as operations in the vector model such that compositionality is respected? Because the present work is a continuation of that work, we present the main statements and results from which to reference and build; see \cite{quigley2025,quigley2025neurosymbolic} for fuller details.

\begin{theorem}
    For the extensional model $\mathcal{M}_{\text {ext }}$ and the vector space model $\mathcal{M}_{\mathcal{S}}$, there exist injective mappings $H=\left\{h_\tau\right\}_{\tau \in \mathcal{T}}$ where each $h_\tau: \mathcal{D}_\tau \rightarrow \IM\left(h_\tau\right) \subseteq \mathcal{S}_{\mathcal{D}_\tau}$ for each semantic domain $\mathcal{D}_\tau$. For every semantic function $f: \mathcal{D}_A \rightarrow \mathcal{D}_B$ in $\mathcal{M}_{\text {ext }}$, there exists a corresponding function $h_f: \IM\left(h_A\right) \rightarrow \IM\left(h_B\right)$ in $\mathcal{M}_{\mathcal{S}}$ such that for all $a \in \mathcal{D}_A$ :
$$
h_f\left(h_A(a)\right)=h_B(f(a))
$$
\end{theorem}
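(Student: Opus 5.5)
The plan is to build the maps $h_\tau$ by induction on the type structure $\mathcal{T}$, and then to show that each semantic function can be realized in the vector model.

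\textbf{Base types.} For $e$, let $\mathcal{S}_{\mathcal{D}_e} = \ell^2(\mathcal{D}_e)$. This is the Hilbert space with orthonormal basis $\{\mathbf{e}_a : a \in \mathcal{D}_e\}$, and it makes sense for a domain of any cardinality. Set $h_e(a) = \mathbf{e}_a$. For $t$, take $\mathcal{S}_{\mathcal{D}_t} = \mathbb{R}^2$ with $h_t(0) = (1,0)^\top$ and $h_t(1) = (0,1)^\top$, following Mizraji's vector logic. Both maps send distinct elements to distinct orthonormal vectors, so they are injective.

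\textbf{Function types.} For $\langle\alpha,\beta\rangle$, assume $h_\alpha$ and $h_\beta$ are already defined and injective, and that $h_\alpha(\mathcal{D}_\alpha)$ is an orthonormal family. Then $h_\alpha(\mathcal{D}_\alpha)$ is a linearly independent set. So each $f \in \mathcal{D}_{\langle\alpha,\beta\rangle}$ determines a linear map $L_f$ on the span of this family, given by $L_f(h_\alpha(a)) = h_\beta(f(a))$.

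Set $h_{\langle\alpha,\beta\rangle}(f) = L_f$, viewed in a $\operatorname{Hom}$ space $\mathcal{S}_{\mathcal{D}_{\langle\alpha,\beta\rangle}} \subseteq \operatorname{Hom}(\mathcal{S}_{\mathcal{D}_\alpha}, \mathcal{S}_{\mathcal{D}_\beta})$. Injectivity follows directly. If $L_f = L_g$, then $h_\beta(f(a)) = h_\beta(g(a))$ for every $a$. Injectivity of $h_\beta$ then gives $f(a) = g(a)$ for every $a$, so $f = g$.

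To continue the induction, the images at function types must also form an orthonormal family. One option is to use the orthonormal basis $\{\mathbf{e}_f\}$ of $\ell^2(\mathcal{D}_{\langle\alpha,\beta\rangle})$ together with an evaluation pairing. Another is to note that the operators $L_f$ are linearly independent when $\mathcal{D}_\beta$ has at least two elements, and then pass to an orthonormalization. I expect this step to be the main technical obstacle. A linear extension to the whole space is only needed on $\IM(h_A)$, so the simplest route is to define every $h_\tau$ so that its image is an orthonormal set, for example by taking $\mathcal{S}_{\mathcal{D}_\tau} = \ell^2(\mathcal{D}_\tau)$ throughout and recording application as a separate bilinear map. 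In $\mathbb{R}^2$, $h_t$ is already orthonormal.

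\textbf{Commuting condition.} Given $f:\mathcal{D}_A\to\mathcal{D}_B$, define $h_f := h_B\circ f\circ h_A^{-1}$ on $\IM(h_A)$. This is well defined because $h_A$ is injective, and it satisfies $h_f(h_A(a)) = h_B(f(a))$ by construction. Because $\IM(h_A)$ is linearly independent, $h_f$ extends uniquely to a linear map on its span. It also coincides with the application $L_f(h_A(a))$ when $f \in \mathcal{D}_{\langle A,B\rangle}$.

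\textbf{Curried functions.} For multi-argument functions, apply the same construction to the orthonormal product basis $\{h_A(a)\otimes h_{A'}(a')\}$ of the tensor product $\mathcal{S}_{\mathcal{D}_A}\otimes\mathcal{S}_{\mathcal{D}_{A'}}$. This yields multilinear realizations, and composition is preserved because each step commutes on basis vectors.

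\textbf{Boundedness.} If $\mathcal{D}_\alpha$ is infinite, the linear extension of $L_f$ may fail to be bounded. I would handle this by restricting to the algebraic span, since the theorem only asks for $h_f$ on $\IM(h_A)$. Alternatively, the extension is bounded whenever the images $h_\beta(f(a))$ have uniformly bounded norm, and that holds here because all image vectors are unit vectors.
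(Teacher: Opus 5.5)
Your decisive step is the paper's. Given injective $h_A$ and $h_B$, the map $h_f:=h_B\circ f\circ h_A^{-1}$ on $\IM(h_A)$ is well defined and makes the square commute, and the statement asks for nothing beyond this set map. The paper's text supplies only $h_e$, $h_t$ and the truth-functional matrices. It defers function-type embeddings to the cited extensional work, and uses $\operatorname{Hom}$ spaces in the intensional section. You, by contrast, actually produce injective $h_\tau$ at every type. The route you finally endorse, $\mathcal{S}_{\mathcal{D}_\tau}=\ell^2(\mathcal{D}_\tau)$ with $h_\tau(x)=\mathbf{e}_x$ for every $\tau$, does this correctly and needs no induction at all. (Your $h_t$ reverses the paper's and Mizraji's convention $h_t(1)=(1,0)^\top$. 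This is harmless for injectivity, but under your convention the paper's matrix $C$ computes disjunction rather than conjunction.)

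Two claims you offer along the way are false, however, and should be dropped rather than kept as alternatives.

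First, the operators $L_f$ are \emph{never} linearly independent once $|\mathcal{D}_\alpha|\ge 2$ and $|\mathcal{D}_\beta|\ge 2$. Pick $a_1\neq a_2$ and $b_1\neq b_2$, and let the $f_{ij}$ agree off $\{a_1,a_2\}$ with $f_{ij}(a_1)=b_i$ and $f_{ij}(a_2)=b_j$. Checking on each $h_\alpha(a)$ gives $L_{f_{11}}+L_{f_{22}}=L_{f_{12}}+L_{f_{21}}$. (For $|\mathcal{D}_e|=n\ge 3$ there are simply $2^n$ predicates in the $2n$-dimensional space $\operatorname{Hom}(\mathbb{R}^n,\mathbb{R}^2)$.) So there is nothing to orthonormalize, and the $\operatorname{Hom}$ encoding does not iterate. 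Take $|\mathcal{D}_e|=2$ and $F:\mathcal{D}_{\langle e,t\rangle}\to\mathcal{D}_t$ true only of the constant-$1$ predicate. A linear $L_F$ with $L_F(L_g)=h_t(F(g))$ would force $h_t(1)+h_t(0)=2h_t(0)$, contradicting injectivity of $h_t$. For the same reason, your sentence that $h_f$ extends linearly to $\operatorname{span}\IM(h_A)$ holds only on the $\ell^2(\mathcal{D}_\tau)$ route, not when $A$ is a function type encoded by $\operatorname{Hom}$.

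Second, the boundedness remark is wrong: uniformly bounded images of an orthonormal basis do not give a bounded operator in general. If $f$ is constant with value $b$ on an infinite $\mathcal{D}_\alpha$, then $L_f$ sends the unit vector $N^{-1/2}\sum_{i=1}^{N}\mathbf{e}_{a_i}$ to $\sqrt{N}\,h_\beta(b)$. Restricting to the algebraic span, your first option, is the right fix. It is enough because the statement only concerns $\IM(h_A)$.

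The trade-off between the two encodings is then clear.
Your encoding keeps every $\mathcal{S}_{\mathcal{D}_\tau}$ a Hilbert space with orthonormal image at every order. So unique (multi)linear extensions on spans exist throughout, but function-type objects become opaque, mutually orthogonal vectors and application becomes an external bilinear map.
The $\operatorname{Hom}$ encoding makes first-order predicates literal matrices, as in vector logic. By the relation above, though, it supports linear lifting in general only when the argument type is primitive.
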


The proof proceeds by constructing the component mappings.

The domain $\mathcal{D}_e=\left\{e_i \mid i \in I\right\}$ maps to a vector space $\mathcal{S}_{\mathcal{D}_e}$ with basis $\left\{\mathbf{b}_i \mid i \in I\right\}$ via $h_e\left(e_i\right)=\mathbf{b}_i$. Injectivity follows from linear independence of basis vectors. This mapping preserves $n$-ary relations: for any relation $\mathcal{R} \subseteq \mathcal{D}_e^n$, there exists $\mathcal{R}^{\prime} \subseteq \mathcal{S}_{\mathcal{D}_e}^n$ such that $\mathcal{R}\left(k_1, \ldots, k_n\right) \Leftrightarrow \mathcal{R}^{\prime}\left(h_e\left(k_1\right), \ldots, h_e\left(k_n\right)\right)$. Set-theoretic operations (union, intersection, complement, power set) transfer under $h_e$.

The domain $\mathcal{D}_t=\{1,0\}$ maps to orthonormal basis vectors in $\mathbb{R}^2$, consistent with the vectorization of truth values in vector logic:

$$
h_t(1)=\mathbf{b}_1=\left[\begin{array}{l}
1 \\
0
\end{array}\right], \quad h_t(0)=\mathbf{b}_0=\left[\begin{array}{l}
0 \\
1
\end{array}\right]
$$

Logical operators become matrices. For an $n$-ary operator $\mathrm{NOP}: \mathcal{D}_t^n \rightarrow \mathcal{D}_t$, there exists a $2 \times 2^n$ matrix $M$ such that:

$$
\operatorname{NOP}^{\prime}\left(h_t\left(t_1\right), \ldots, h_t\left(t_n\right)\right)=M \cdot\left(\bigotimes_{i=1}^n h_t\left(t_i\right)\right)=h_t\left(\operatorname{NOP}\left(t_1, \ldots, t_n\right)\right)
$$

Negation, for instance, corresponds to the matrix $N=\left[\begin{array}{ll}0 & 1 \\ 1 & 0\end{array}\right]$; conjunction to $C= \left[\begin{array}{llll}1 & 0 & 0 & 0 \\ 0 & 1 & 1 & 1\end{array}\right]$.

For arbitrary semantic functions, given injective $h_A: \mathcal{D}_A \rightarrow \IM\left(h_A\right)$ and $h_B: \mathcal{D}_B \rightarrow \IM\left(h_B\right)$, define $h_f(\mathbf{v})=h_B\left(f\left(h_A^{-1}(\mathbf{v})\right)\right)$ for $\mathbf{v} \in \IM\left(h_A\right)$. The diagram commutes:

\[\begin{tikzcd}[column sep=small,ampersand replacement=\&]
	{\mathcal{D}_A} \&\& {\mathcal{D}_B} \\
	\\
	{\IM(h_A)} \&\& {\IM(h_B)}
	\arrow["{f}", from=1-1, to=1-3]
	\arrow["{h_A}"', from=1-1, to=3-1]
	\arrow["{h_B}", from=1-3, to=3-3]
	\arrow["{h_f}"', from=3-1, to=3-3]
\end{tikzcd}\]

\begin{theorem}
    For a sequence of semantic functions $f_1, f_2, \ldots, f_n$ where $f_i: \mathcal{D}_{A_i} \rightarrow \mathcal{D}_{A_{i+1}}$, the composition transfers:

$$
\left(h_{f_n} \circ h_{f_{n-1}} \circ \cdots \circ h_{f_1}\right)\left(h_{A_1}(a)\right)=h_{A_{n+1}}\left(\left(f_n \circ f_{n-1} \circ \cdots \circ f_1\right)(a)\right)
$$
\end{theorem}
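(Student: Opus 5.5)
The plan is induction on the length $n$ of the sequence, using at each step only the commuting square established for a single semantic function in the preceding theorem: for every $f: \mathcal{D}_A \to \mathcal{D}_B$ and every $a \in \mathcal{D}_A$, $h_f(h_A(a)) = h_B(f(a))$, where $h_f(\mathbf{v}) = h_B(f(h_A^{-1}(\mathbf{v})))$ is defined on $\IM(h_A)$. The base case $n=1$ is exactly that square for $f_1: \mathcal{D}_{A_1} \to \mathcal{D}_{A_2}$.

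For the inductive step, assume that for every $a \in \mathcal{D}_{A_1}$
$$(h_{f_{n-1}} \circ \cdots \circ h_{f_1})(h_{A_1}(a)) = h_{A_n}\big((f_{n-1} \circ \cdots \circ f_1)(a)\big).$$
Set $a' = (f_{n-1} \circ \cdots \circ f_1)(a) \in \mathcal{D}_{A_n}$. The right-hand side is then $h_{A_n}(a') \in \IM(h_{A_n})$, which is the domain of $h_{f_n}$, so applying $h_{f_n}$ is legitimate. The single-function square for $f_n$ gives $h_{f_n}(h_{A_n}(a')) = h_{A_{n+1}}(f_n(a'))$, and $f_n(a') = (f_n \circ \cdots \circ f_1)(a)$. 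Chaining these equalities closes the induction.

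The only point needing care is well-definedness of the composite on the vector side. Each $h_{f_i}$ is defined only on $\IM(h_{A_i})$, not on all of $\mathcal{S}_{\mathcal{D}_{A_i}}$, so I must check that the codomain of $h_{f_i}$ lands inside the domain of $h_{f_{i+1}}$. By construction $h_{f_i}$ maps into $\IM(h_{A_{i+1}})$, so the composite is defined; this is also what the inductive hypothesis tracks. I also need that the same embedding $h_{A_{i+1}}$ is used both as the target of $h_{f_i}$ and as the source of $h_{f_{i+1}}$. This holds because $H = \{h_\tau\}$ assigns one fixed map per type, and injectivity of that map makes $h_{A_{i+1}}^{-1}$ single-valued on its image.

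As an alternative, one can observe directly that
$$h_{f_{i+1}} \circ h_{f_i} = h_{A_{i+2}} \circ f_{i+1} \circ h_{A_{i+1}}^{-1} \circ h_{A_{i+1}} \circ f_i \circ h_{A_i}^{-1} = h_{A_{i+2}} \circ (f_{i+1} \circ f_i) \circ h_{A_i}^{-1},$$
since $h_{A_{i+1}}^{-1} \circ h_{A_{i+1}} = \mathrm{id}$ by injectivity. This shows $h_{f_{i+1}} \circ h_{f_i} = h_{f_{i+1} \circ f_i}$, so $f \mapsto h_f$ is functorial. The theorem then follows from the single-function case applied to $f_n \circ \cdots \circ f_1$. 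I would include this as a remark, since it is the conceptual reason the induction works.
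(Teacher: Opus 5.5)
Your proof is correct and is essentially the paper's argument. Like the paper, you use induction on $n$: the base case is the single-function commuting square $h_{f}(h_{A}(a)) = h_{B}(f(a))$, and the inductive step feeds the output of the inductive hypothesis (which lies in the image of $h_{A_n}$) into the defining property of $h_{f_n}$. This is exactly how the paper proves the intensional analogue, Theorem~\ref{thm:compn}; the extensional statement itself is only quoted from prior work. Your remark that $h_{f_{i+1}} \circ h_{f_i} = h_{f_{i+1} \circ f_i}$ is a correct and slightly more conceptual repackaging of the same fact, not a different route.
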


This establishes that the recursive structure of semantic composition has a corresponding structure in the vector model.

\begin{remark}
    The extensional results establish compatibility rather than unification; the homomorphism is injective. The vector space contains elements outside the image of the mapping; this is necessary: bijection would fail for infinite domains exceeding the cardinality of finite-dimensional spaces. If $\left|\mathcal{D}_e\right|>\mathfrak{c}$, the resulting Hilbert space is non-separable; proofs remain intact because only completeness is required, not countable basis \cite{Conway2007}. Hence, throughout, the injection is into $\IM$.
\end{remark}

The extension to intensional models requires representing Kripke frame structures, sets of indices equipped with accessibility relations, within the vector logic framework; the task of the present work is to accommodate these intensional structures.

\section{Intensional model of semantics}\label{sec:intensional}

Intensional semantics introduces parametrization: the intension of an expression is a function that takes a parameter (an index) as input and returns the extension of the expression at that index. This additional functional layer captures phenomena that extensional semantics cannot: the distinction between necessarily equivalent propositions; the behavior of propositional attitude verbs; temporal and modal variation in reference; counterfactual reasoning.

\begin{definition}[Index sorts]
    Let $\Sigma$ be a non-empty set of \textbf{index sorts}. Each sort $\sigma \in \Sigma$ represents a dimension of intensional variation. Standard choices include:
    \begin{itemize} 
        \item $w \in \Sigma$ possible worlds (for modal and counterfactual reasoning); 
        \item $\iota \in \Sigma$ times (for temporal semantics);
        \item $\ell \in \Sigma$ locations (for spatial semantics);
        \item ....
    \end{itemize} 
    
\end{definition}

The framework accommodates any finite collection of index sorts as required by the linguistic phenomena under analysis. 

\begin{definition}[Intensional types]
    The set of \textbf{intensional types} $\mathcal{T}$ is the smallest set such that: 
    \begin{itemize} 
    \item $e \in \mathcal { T }$ is the type of entities; 
    \item $ t \in \mathcal { T }$ is the type of truth values; 
    \item for each index sort $\sigma \in \Sigma$, we have $\sigma \in \mathcal{T}$ as a primitive index type; 
    \item if $\alpha, \beta \in \mathcal{T}$, then $\langle\alpha, \beta\rangle \in \mathcal{T}$. 
    \end{itemize}
\end{definition}

    We partition the types as follows: 
    \begin{itemize} 
        \item $\mathcal{T}_{ext} = \{e, t\} $ extensional primitive types; 
        \item $\mathcal{T}_{idx} = \Sigma$ index types; 
        \item $\mathcal{T}_0 = \mathcal{T}_{ext} \cup \mathcal{T}_{idx}$ all primitive types;
        \item $\mathcal{T}_+ = \mathcal{T} \setminus \mathcal{T}_0$ complex (function) types. 
    \end{itemize}

\begin{definition}[Intensional model]\label{def:intenmod}
An \textbf{intensional model} $\mathcal{M}_{int}$ is a tuple:
$$\mathcal{M}_{int} = \langle (\mathcal{D}_\tau)_{\tau \in \mathcal{T}}, (\mathcal{F}_\sigma)_{\sigma \in \Sigma}, S, \mathcal{I} \rangle$$

where the components are specified as follows:

\begin{itemize}
    \item for each type $\tau \in \mathcal{T}$, the model determines a domain $\mathcal{D}_\tau$:
    \begin{itemize}
        \item $\mathcal{D}_e$ is a non-empty set of entities;
        \item $\mathcal{D}_t = \{0, 1\}$ is the set of truth values;
        \item for each index sort $\sigma \in \Sigma$, $\mathcal{D}_\sigma$ is a non-empty set of indices of that sort;
        \item for function types: $\mathcal{D}_{\langle \alpha, \beta \rangle} = \mathcal{D}_\beta^{\mathcal{D}_\alpha}$, the set of all functions from $\mathcal{D}_\alpha$ to $\mathcal{D}_\beta$.
    \end{itemize}

    \item for each index sort $\sigma \in \Sigma$, the frame $\mathcal{F}_\sigma = (\mathcal{D}_\sigma, \mathcal{R}_\sigma)$ consists of the set of indices of sort $\sigma$ together with an accessibility relation $\mathcal{R}_\sigma \subseteq \mathcal{D}_\sigma \times \mathcal{D}_\sigma$;

    \item the set of compound indices is the Cartesian product:
    $$S = \prod_{\sigma \in \Sigma} \mathcal{D}_\sigma.$$
    Each element $s \in S$ is a tuple $s = (s_\sigma)_{\sigma \in \Sigma}$ specifying a value for each index dimension. We write $s_\sigma$ for the $\sigma$-component of $s$;

    \item for each non-logical constant $c$ of type $\tau$ in the formal language $\mathcal{FL}$, the interpretation function $\mathcal{I}$ assigns an intension:
    $$\mathcal{I}(c): S \to \mathcal{D}_\tau.$$
    At each compound index $s \in S$, the extension of $c$ at $s$ is $\mathcal{I}(c)(s) \in \mathcal{D}_\tau$.
\end{itemize}

\end{definition}

The technicalities of Definition~\ref{def:intenmod} may be characterized in prose as:
$$\mathcal{M}_{int} = \left\langle \begin{array}{c} \text{type-indexed domains} \\ \text{of semantic values} \end{array}, \begin{array}{c} \text{sort-indexed frames} \\ \text{with accessibility} \end{array}, \begin{array}{c} \text{compound} \\ \text{index space} \end{array}, \begin{array}{c} \text{intension} \\ \text{assignment} \end{array} \right\rangle$$

\begin{definition}[Intensional denotation]\label{def:intendenot}
The \textbf{intensional denotation function} $\llbracket \cdot \rrbracket^{\mathcal{M}, g, s}$ assigns to every expression $\varphi$ of $\mathcal{FL}$ a semantic value at compound index $s \in S$, defined recursively:

\begin{enumerate}
    \item if $\varphi$ is a constant: $\llbracket \varphi \rrbracket^{\mathcal{M}, g, s} = \mathcal{I}(\varphi)(s)$;
    
    \item let $V$ be the set of individual variables. The \textbf{assignment function} $g: V \to \mathcal{D}_e$ maps each variable to an entity. For any variable $x \in V$ and entity $k \in \mathcal{D}_e$, the **variant assignment** $g[x \mapsto k]$ is defined by:
$$g[x \mapsto k](y) = \begin{cases} k & \text{if } y = x \\ g(y) & \text{if } y \neq x \end{cases}$$

    If $\varphi$ is a variable $x \in V$: $\llbracket \varphi \rrbracket^{\mathcal{M}, g, s} = g(x)$;
    
    \item for function application, if $\varphi = \alpha(\beta)$ where $\alpha$ is of type $\langle \tau_1, \tau_2 \rangle$ and $\beta$ is of type $\tau_1$:
    $$\llbracket \alpha(\beta) \rrbracket^{\mathcal{M}, g, s} = \llbracket \alpha \rrbracket^{\mathcal{M}, g, s}(\llbracket \beta \rrbracket^{\mathcal{M}, g, s});$$
    
    \item for lambda abstraction, if $\varphi = \lambda x_\tau . \psi$ where $x$ is a variable of type $\tau$:
    $$\llbracket \lambda x_\tau . \psi \rrbracket^{\mathcal{M}, g, s} = \text{the function } f: \mathcal{D}_\tau \to \mathcal{D}_{\tau'} \text{ such that } f(d) = \llbracket \psi \rrbracket^{\mathcal{M}, g[x \mapsto d], s};$$
    
    \item for modal operators indexed by sort $\sigma \in \Sigma$:
    $$\llbracket \Box_\sigma \varphi \rrbracket^{\mathcal{M}, g, s} = 1 \iff \text{for all } s' \in S \text{ with } s_\sigma \mathcal{R}_\sigma s'_\sigma \text{ and } s_{\sigma'} = s'_{\sigma'} \text{ for all } \sigma' \neq \sigma: \llbracket \varphi \rrbracket^{\mathcal{M}, g, s'} = 1.$$
\end{enumerate}
\end{definition}

\begin{remark}[Index-Specific Modality]
Clause 5 defines $\Box_\sigma$ as quantifying over indices, differing only in the $\sigma$-dimension. Thus:
\begin{itemize}
    \item $\Box_w$ expresses metaphysical or alethic necessity (quantifying over accessible worlds, holding time and location fixed);
    \item $\Box_\iota$ expresses temporal necessity or ``always'' (quantifying over accessible times, holding world and location fixed);
    \item $\Box_\ell$ over accessible locations.
\end{itemize}
This permits interaction between modal operators: $\Box_w \Box_\iota \varphi$ expresses that $\varphi$ holds at all accessible worlds and all accessible times.
\end{remark}

Under this framework (following \cite{vonFintel2023,Chierchia2000,Coppock2024}), intensional objects receive the following type assignments:
\begin{itemize}
    \item propositions are type $\langle S, t \rangle$, functions from compound indices to truth values;
    \item individual concepts are type $\langle S, e \rangle$, functions from compound indices to entities;
    \item properties are type $\langle S, \langle e, t \rangle \rangle$ functions from compound indices to characteristic functions.
\end{itemize}
The intension-extension distinction is captured as follows: for a constant $c$ of extensional type $\tau$, the intension is $\mathcal{I}(c) \in \mathcal{D}_{\langle S, \tau \rangle}$, and the extension at index $s$ is $\mathcal{I}(c)(s) \in \mathcal{D}_\tau$.

\section{Homomorphism}\label{sec:homomorphism}
We now state the main theorem establishing compatibility between the intensional model $\mathcal{M}_{int}$ and a vector space model $\mathcal{M}_{\mathcal{S}}$.

\begin{theorem}[Intensional homomorphism]\label{thm:intenhomo}
Let $\mathcal{M}_{int}$ be an intensional model with domains $\{\mathcal{D}_\tau\}_{\tau \in \mathcal{T}}$. Then there exist a family of vector spaces $\{\mathcal{S}_{\mathcal{D}_\tau}\}_{\tau \in \mathcal{T}}$ and injective maps $h_\tau: \mathcal{D}_\tau \to \mathcal{S}_{\mathcal{D}_\tau}$ for each $\tau \in \mathcal{T}$ such that for every intensional semantic function
$$f: \mathcal{D}_{\tau_1} \times \mathcal{D}_{\tau_2} \times \cdots \times \mathcal{D}_{\tau_n} \longrightarrow \mathcal{D}_{\tau_{n+1}},$$
there is a corresponding (multi)linear map
$$f': \IM(h_{\tau_1}) \times \IM(h_{\tau_2}) \times \cdots \times \IM(h_{\tau_n}) \to \IM(h_{\tau_{n+1}})$$
in the vector space model $\mathcal{M}_{\mathcal{S}}$, satisfying for all $a_i \in \mathcal{D}_{\tau_i}$:
$$h_{\tau_{n+1}}(f(a_1, a_2, \ldots, a_n)) = f'(h_{\tau_1}(a_1), h_{\tau_2}(a_2), \ldots, h_{\tau_n}(a_n)).$$

\end{theorem}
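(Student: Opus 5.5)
The plan is to build the spaces $\mathcal{S}_{\mathcal{D}_\tau}$ by induction on the structure of $\tau \in \mathcal{T}$. After that, we construct $f'$ by the same ``pull back along the injection, push forward along the injection'' device used in the extensional theorem of Section~\ref{sec:extensional}. We then extend it linearly, or multilinearly, from a basis.

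For the base cases, take $\mathcal{D}_e$ and each index domain $\mathcal{D}_\sigma$ for $\sigma \in \Sigma$. Let $\mathcal{S}_{\mathcal{D}_e} = \ell^2(\mathcal{D}_e)$ and $\mathcal{S}_{\mathcal{D}_\sigma} = \ell^2(\mathcal{D}_\sigma)$, the Hilbert spaces with orthonormal bases $\{\mathbf{b}_d\}$ indexed by the respective sets, and set $h(d) = \mathbf{b}_d$. As in the Remark of Section~\ref{sec:extensional}, these spaces may be non-separable, which is harmless. For truth values, take $h_t$ into $\mathbb{R}^2$ exactly as before. The compound index space is handled the same way by setting $\mathcal{S}_S = \ell^2(S)$. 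Its basis $\{\mathbf{b}_s\}$ coincides with the elementary tensors $\bigotimes_\sigma \mathbf{b}_{s_\sigma}$ when $\Sigma$ is finite, which is the case the framework considers. Injectivity in every base case is immediate from the distinctness of orthonormal basis vectors.

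For a function type $\langle \alpha,\beta\rangle$, I would not try to embed all of $\operatorname{Hom}(\mathcal{S}_{\mathcal{D}_\alpha},\mathcal{S}_{\mathcal{D}_\beta})$. Instead I would index by the set itself: let $\mathcal{S}_{\mathcal{D}_{\langle\alpha,\beta\rangle}} = \ell^2(\mathcal{D}_{\langle\alpha,\beta\rangle})$ and $h_{\langle\alpha,\beta\rangle}(g) = \mathbf{b}_g$. This choice is what makes the induction uniform and injectivity trivial, including for uncountable function spaces. In a remark I would also note the more natural alternative. When $\mathcal{D}_\alpha$ is finite, one can send $g$ to the linear operator determined by $\mathbf{b}_a \mapsto h_\beta(g(a))$, which is injective because $h_\beta$ is. This recovers intensions as operators from the index space, but boundedness can fail in general, which is why the basis version is the official construction.

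Next, fix an $n$-ary intensional semantic function $f$. Since each $\IM(h_{\tau_i})$ consists of basis vectors, define $f'$ on tuples of basis vectors by
$$f'(h_{\tau_1}(a_1),\ldots,h_{\tau_n}(a_n)) = h_{\tau_{n+1}}(f(a_1,\ldots,a_n)).$$
This is well defined because the $h_{\tau_i}$ are injective, and the commutation equation holds by definition. The remaining step, and the main obstacle, is showing that this assignment is the restriction of a genuine multilinear map. The key point is that a multilinear map on $\prod_i \operatorname{span}\{\mathbf{b}\}$ is freely determined by its values on tuples of basis vectors. So $f'$ extends uniquely to a multilinear map on the algebraic spans, equivalently to a linear map on $\bigotimes_i \operatorname{span}$, generalizing the $2\times 2^n$ matrices $M$ of vector logic.

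The delicate part is continuity on the completed Hilbert spaces. The statement only asks for $f'$ on $\IM(h_{\tau_1})\times\cdots\times\IM(h_{\tau_n})$ to be multilinear, so I would prove multilinearity on the algebraic tensor product and remark that boundedness holds whenever the fibers $f^{-1}(b)$ are finite, or for finite domains. Finally, composition is preserved by chaining the defining equations exactly as in the composition theorem of Section~\ref{sec:extensional}.
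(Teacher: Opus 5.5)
Your proof is correct, but it departs from the paper at function types, and the departure matters. The paper sets $\mathcal{S}_{\mathcal{D}_{\langle\alpha,\beta\rangle}} = \operatorname{Hom}(\mathcal{S}_{\mathcal{D}_\alpha},\mathcal{S}_{\mathcal{D}_\beta})$ with $h_{\langle\alpha,\beta\rangle}(g)(\mathbf{b}_a) = h_\beta(g(a))$. It gets injectivity from Lemma~\ref{lem:injfunc} and builds $f'$ by the same pull-back/push-forward you use (Lemma~\ref{lem:lift}). It then extends multilinearly in Lemma~\ref{lem:multilin}, asserting that $\IM(h_{\tau_i})$ is linearly independent ``in either case''.

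For function types that assertion is false. Take $\mathcal{D}_e=\{x,y\}$. The constantly true predicate, the constantly false one, and the characteristic functions of $\{x\}$ and $\{y\}$ map respectively to
\[
\begin{pmatrix}1&1\\0&0\end{pmatrix},\qquad
\begin{pmatrix}0&0\\1&1\end{pmatrix},\qquad
\begin{pmatrix}1&0\\0&1\end{pmatrix},\qquad
\begin{pmatrix}0&1\\1&0\end{pmatrix}.
\]
The first two sum to the same matrix as the last two. Hence the unary $f:\mathcal{D}_{\langle e,t\rangle}\to\mathcal{D}_t$ that is $1$ only on the constantly true predicate has no linear extension, since one would force $\mathbf{b}_1+\mathbf{b}_0=2\mathbf{b}_0$. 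In general the images of $\mathcal{D}_{\langle e,t\rangle}$ lie in the $(|\mathcal{D}_e|+1)$-dimensional subspace of matrices with equal column sums, while there are $2^{|\mathcal{D}_e|}$ of them.

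Your choice $\mathcal{S}_{\mathcal{D}_{\langle\alpha,\beta\rangle}}=\ell^2(\mathcal{D}_{\langle\alpha,\beta\rangle})$ makes every image an orthonormal set. That is exactly what the multilinear extension needs, so your argument goes through where the paper's breaks.

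What the paper's encoding buys is that intensions are literally operators and application is operator evaluation. You can still recover this. Application $\mathcal{D}_{\langle\alpha,\beta\rangle}\times\mathcal{D}_\alpha\to\mathcal{D}_\beta$ is itself a semantic function, so your theorem gives a bilinear lift $\mathrm{app}'$. The curried map $\mathbf{b}_g\mapsto\mathrm{app}'(\mathbf{b}_g,\cdot)$ sends $\mathbf{b}_g$ to the operator $\mathbf{b}_a\mapsto h_\beta(g(a))$, which is the paper's $h_{\langle\alpha,\beta\rangle}(g)$ built over your spaces. This curried map is injective on basis vectors but not on their span. That is precisely why the operator encoding cannot serve as the domain of a multilinear lift.

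Two side remarks need adjusting, though neither affects the statement, which only needs multilinearity on the algebraic spans.
\begin{itemize}
\item The reason to make the basis encoding official is linear independence, not boundedness. The $\operatorname{Hom}$ alternative you mention for finite $\mathcal{D}_\alpha$ is injective and automatically bounded. Yet by the example above it fails the multilinearity clause as soon as a function-type object is an argument, so it is not an interchangeable option.
\item For $n=1$, the linear extension $T$ of $\mathbf{b}_a\mapsto\mathbf{b}_{f(a)}$ is bounded on $\ell^2$ if and only if $\sup_b|f^{-1}(b)|<\infty$. Finite fibers alone are not enough: a fiber of size $k$ already forces $\|T\|\geq\sqrt{k}$.
\end{itemize}
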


Moreover, composition of functions in $\mathcal{M}_{int}$ corresponds to composition of their lifts in $\mathcal{M}_{\mathcal{S}}$, and the following diagram commutes:
\[\begin{tikzcd}[column sep=small]
    {\mathcal{D}_{\tau_1} \times \mathcal{D}_{\tau_2} \times \cdots \times \mathcal{D}_{\tau_n}} && {\mathcal{D}_{\tau_{n+1}}} \\
    \\
    {\IM(h_{\tau_1}) \times \IM(h_{\tau_2}) \times \cdots \times \IM(h_{\tau_n})} && {\IM(h_{\tau_{n+1}})}
    \arrow["f", from=1-1, to=1-3]
    \arrow["{h_{\tau_1} \times h_{\tau_2} \times \cdots \times h_{\tau_n}}"', from=1-1, to=3-1]
    \arrow["{h_{\tau_{n+1}}}", from=1-3, to=3-3]
    \arrow["{f'}"', from=3-1, to=3-3]
\end{tikzcd}\]

We prove Theorem~\ref{thm:intenhomo} by constructing the vector space model $\mathcal{M}_{\mathcal{S}}$ and establishing that it faithfully represents the intensional model $\mathcal{M}_{int}$. The proof proceeds in stages: we first construct vector spaces $\mathcal{S}_{\mathcal{D}_\tau}$ and injective embeddings $h_\tau$ for each type $\tau \in \mathcal{T}$, then show that semantic functions lift to (multi)linear maps preserving composition.

\subsection{Type embeddings}\label{subsec:types}

We construct $\mathcal{M}_{\mathcal{S}}$ by defining, for each type $\tau \in \mathcal{T}$, a vector space $\mathcal{S}_{\mathcal{D}_\tau}$ and an injective map $h_\tau: \mathcal{D}_\tau \to \mathcal{S}_{\mathcal{D}_\tau}$. The construction proceeds by recursion on the structure of types.

As in the extensional case \cite{quigley2025}, let $\mathcal{D}_e$ be the domain of entities. We choose a real vector space $\mathcal{S}_{\mathcal{D}_e}$ of dimension sufficient to accommodate an injective map from $\mathcal{D}_e$:
\begin{itemize}
    \item if $\mathcal{D}_e$ is finite with $|\mathcal{D}_e| = n$, set $\dim(\mathcal{S}_{\mathcal{D}_e}) = n$, and we fix an orthonormal basis $\{\mathbf{b}_x \mid x \in \mathcal{D}_e\}$;
    \item if $\mathcal{D}_e$ is infinite, choose an infinite-dimensional Hilbert space with orthonormal basis indexed by $\mathcal{D}_e$.
\end{itemize}

Define:
$$h_e(x) = \mathbf{b}_x \quad \text{for each } x \in \mathcal{D}_e.$$

Injectivity follows from the linear independence of basis vectors: if $x \neq y$, then $\mathbf{b}_x \neq \mathbf{b}_y$, hence $h_e(x) \neq h_e(y)$.

As in the extensional case \cite{quigley2025} and in the formulation in \cite{Mizraji1992,Mizraji1996,Mizraji2008}, let $\mathcal{S}_{\mathcal{D}_t} = \mathbb{R}^2$ with orthonormal basis vectors representing truth and falsity:
$$h_t(1) = \mathbf{b}_1 = \begin{bmatrix} 1 \\ 0 \end{bmatrix}, \quad h_t(0) = \mathbf{b}_0 = \begin{bmatrix} 0 \\ 1 \end{bmatrix}.$$

Injectivity is immediate: $h_t(1) \neq h_t(0)$.

For each index sort $\sigma \in \Sigma$, let $\mathcal{D}_\sigma$ be the set of indices of that sort. We construct $\mathcal{S}_{\mathcal{D}_\sigma}$ analogously to the entity case:
\begin{itemize}
    \item if $\mathcal{D}_\sigma$ is finite, set $\dim(\mathcal{S}_{\mathcal{D}_\sigma}) = |\mathcal{D}_\sigma|$ with orthonormal basis $\{\mathbf{b}_i \mid i \in \mathcal{D}_\sigma\}$;
    \item if $\mathcal{D}_\sigma$ is infinite, choose an infinite-dimensional Hilbert space with orthonormal basis indexed by $\mathcal{D}_\sigma$.
\end{itemize}

Define:
$$h_\sigma(i) = \mathbf{b}_i \quad \text{for each } i \in \mathcal{D}_\sigma.$$

Injectivity follows as before.

The compound index space $S = \prod_{\sigma \in \Sigma} \mathcal{D}_\sigma$ is treated as a primitive domain. We define $\mathcal{S}_S$ with orthonormal basis $\{\mathbf{b}_s \mid s \in S\}$:
\begin{itemize}
    \item if $S$ is finite, $\dim(\mathcal{S}_S) = |S| = \prod_{\sigma \in \Sigma} |\mathcal{D}_\sigma|$;
    \item if $S$ is infinite, we take an infinite-dimensional Hilbert space with orthonormal basis indexed by $S$.
\end{itemize}

Define:
$$h_S(s) = \mathbf{b}_s \quad \text{for each } s \in S.$$

Injectivity is immediate from the distinctness of basis vectors.

Let $\tau = \langle \tau_1, \tau_2 \rangle$ be a function type. The domain is:
$$\mathcal{D}_\tau = \mathcal{D}_{\tau_2}^{\mathcal{D}_{\tau_1}} = \{f \mid f: \mathcal{D}_{\tau_1} \to \mathcal{D}_{\tau_2}\}.$$

Suppose inductively that we have constructed injections $h_{\tau_1}: \mathcal{D}_{\tau_1} \to \mathcal{S}_{\mathcal{D}_{\tau_1}}$ and $h_{\tau_2}: \mathcal{D}_{\tau_2} \to \mathcal{S}_{\mathcal{D}_{\tau_2}}$. Define:
$$\mathcal{S}_{\mathcal{D}_\tau} = \operatorname{Hom}(\mathcal{S}_{\mathcal{D}_{\tau_1}}, \mathcal{S}_{\mathcal{D}_{\tau_2}}),$$
the vector space of all linear maps from $\mathcal{S}_{\mathcal{D}_{\tau_1}}$ to $\mathcal{S}_{\mathcal{D}_{\tau_2}}$. For each $f \in \mathcal{D}_\tau$, define the linear map $h_\tau(f): \mathcal{S}_{\mathcal{D}_{\tau_1}} \to \mathcal{S}_{\mathcal{D}_{\tau_2}}$ on basis vectors by:
$$h_\tau(f)(\mathbf{b}_a) = h_{\tau_2}(f(a)) \quad \text{for each } a \in \mathcal{D}_{\tau_1},$$
and extend linearly to all of $\mathcal{S}_{\mathcal{D}_{\tau_1}}$.

\begin{lemma}[Injectivity for function types]\label{lem:injfunc}
The map $h_\tau: \mathcal{D}_\tau \to \operatorname{Hom}(\mathcal{S}_{\mathcal{D}_{\tau_1}}, \mathcal{S}_{\mathcal{D}_{\tau_2}})$ is injective.
\end{lemma}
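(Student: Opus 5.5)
The plan is a direct contrapositive argument that uses the inductive hypothesis on $h_{\tau_2}$ together with the fact that distinct basis vectors are distinct vectors. Take $f, g \in \mathcal{D}_\tau$ with $f \neq g$. By definition of $\mathcal{D}_\tau = \mathcal{D}_{\tau_2}^{\mathcal{D}_{\tau_1}}$, the functions then differ at some point: there is $a \in \mathcal{D}_{\tau_1}$ with $f(a) \neq g(a)$. We evaluate the two linear maps at the basis vector $\mathbf{b}_a$. By the defining clause, $h_\tau(f)(\mathbf{b}_a) = h_{\tau_2}(f(a))$ and $h_\tau(g)(\mathbf{b}_a) = h_{\tau_2}(g(a))$. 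Since $h_{\tau_2}$ is injective by the inductive hypothesis, these vectors differ, so $h_\tau(f) \neq h_\tau(g)$ as elements of $\operatorname{Hom}(\mathcal{S}_{\mathcal{D}_{\tau_1}}, \mathcal{S}_{\mathcal{D}_{\tau_2}})$.

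Before this argument goes through, two points need checking.

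\textbf{Well-definedness of $h_\tau(f)$.} It must be a genuine linear map, so one has to confirm that $\{\mathbf{b}_a \mid a \in \mathcal{D}_{\tau_1}\}$ is a basis of $\mathcal{S}_{\mathcal{D}_{\tau_1}}$, or at least a linearly independent set. A linear map can then be prescribed freely on it and extended linearly. For the primitive types $e$, $t$, $\sigma$ and $S$ this was arranged explicitly in the construction.

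\textbf{Function types as $\tau_1$.} When $\tau_1$ is itself a function type, $\mathcal{S}_{\mathcal{D}_{\tau_1}}$ is a Hom space, and the symbol $\mathbf{b}_a$ has to be read as $h_{\tau_1}(a)$. So the induction should carry a strengthened hypothesis: the image $\{h_{\tau}(d) \mid d \in \mathcal{D}_\tau\}$ is linearly independent, and the linear map is defined on its span, extended to all of $\mathcal{S}_{\mathcal{D}_{\tau_1}}$ by a choice of complement (Hamel basis extension).

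The main obstacle is propagating linear independence to function types. For $f_1, \dots, f_k$ distinct, I would show that $h_\tau(f_1), \dots, h_\tau(f_k)$ are independent as follows.
\begin{enumerate}
\item Express each $h_\tau(f_j)$ through the elementary operators $E_{a,d}$ that send $\mathbf{b}_a$ to $h_{\tau_2}(d)$ and kill the other basis vectors.
\item Observe that these $E_{a,d}$ are linearly independent by the independence of the images under $h_{\tau_2}$.
\item Note that $h_\tau(f)$ is, informally, the sum $\sum_a E_{a, f(a)}$.
\end{enumerate}
This can fail in the infinite case, where the sum is not finite. If the strengthening proves too hard, I would fall back on the weaker statement, which is all the lemma asserts. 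In that case $\mathbf{b}_a$ is interpreted, for non-primitive $\tau_1$, as any fixed family indexed injectively by $\mathcal{D}_{\tau_1}$ on which a linear map may be freely specified. The pointwise-evaluation argument of the first paragraph then goes through unchanged, since it only uses evaluation at a single vector and injectivity of $h_{\tau_2}$.
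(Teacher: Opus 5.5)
Your first paragraph is the paper's proof almost word for word: pick $a$ with $f(a)\neq g(a)$, evaluate both maps at $\mathbf{b}_a$, and use injectivity of $h_{\tau_2}$. As an injectivity argument it is correct. It needs only the existence of linear maps $h_\tau(f)$ taking the prescribed values on the $\mathbf{b}_a$, which the paper also assumes without comment.

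The error is in the strengthened induction you propose for that existence claim. Linear independence of $\IM(h_\tau)$ for function types is false, and it already fails in finite dimensions. Take $\mathcal{D}_e=\{x,y\}$ and let $f_{ij}\in\mathcal{D}_{\langle e,t\rangle}$ send $x\mapsto i$, $y\mapsto j$. Then $h(f_{11})+h(f_{00})$ and $h(f_{10})+h(f_{01})$ are both the all-ones $2\times 2$ matrix. More bluntly, $|\mathcal{D}_{\langle e,t\rangle}|=2^n>2n=\dim\operatorname{Hom}(\mathbb{R}^n,\mathbb{R}^2)$ once $n\geq 3$. So the problem in your steps 1--3 is not the infinite sum. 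The $E_{a,d}$ are independent, but distinct $f$ share summands $E_{a,f(a)}$, and that sharing creates linear relations. (The paper asserts the same independence in the proof of Lemma~\ref{lem:multilin}; it is false there too.)

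The fallback does not fix this. A family on which linear maps can be prescribed freely must be linearly independent, even if the prescribed values only range over $\{\mathbf{b}_0,\mathbf{b}_1\}$. For $\tau_1=\langle e,t\rangle$ with $|\mathcal{D}_e|\geq 3$, no $2^n$ independent vectors fit in a $2n$-dimensional space. Where such a family does exist, it is unrelated to $h_{\tau_1}(a)$, so you would be proving injectivity of a map that no longer encodes function application. The natural reading $\mathbf{b}_a=h_{\tau_1}(a)$ fails outright. For $F\in\mathcal{D}_{\langle\langle e,t\rangle,t\rangle}$ with $F(f_{11})=1$ and $F=0$ elsewhere, any linear $L$ with $L(h(g))=h_t(F(g))$ for all $g$ would force $\mathbf{b}_1+\mathbf{b}_0=2\mathbf{b}_0$.

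So the concern you raised is real. It is a defect of the $\operatorname{Hom}$ construction of $h_\tau$ for higher-order argument types, not something a cleverer induction can repair, and it does not touch the injectivity step itself. The sound version restricts the scope. When $\tau_1$ is primitive ($e$, $t$, a sort $\sigma$, or $S$) and $h_{\tau_2}$ is already defined, the $\mathbf{b}_a$ are independent (extend them to a Hamel basis in infinite dimensions), $h_\tau(f)$ exists, and your first paragraph proves the lemma. Applied recursively, this covers every type $\langle\alpha_1,\langle\alpha_2,\ldots\langle\alpha_k,\beta\rangle\ldots\rangle\rangle$ with all $\alpha_i$ and $\beta$ primitive. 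That includes propositions, individual concepts and properties. Drop the strengthened hypothesis rather than trying to prove it.
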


\begin{proof}
Suppose $f, f' \in \mathcal{D}_\tau$ with $f \neq f'$. Then there exists $a \in \mathcal{D}_{\tau_1}$ such that $f(a) \neq f'(a)$. By the injectivity of $h_{\tau_2}$:
$$h_\tau(f)(\mathbf{b}_a) = h_{\tau_2}(f(a)) \neq h_{\tau_2}(f'(a)) = h_\tau(f')(\mathbf{b}_a).$$
Hence $h_\tau(f) \neq h_\tau(f')$ as linear maps.
\end{proof}

Intensions are functions from the compound index space to extensional denotations. For extensional type $\alpha$, the corresponding intensional type is $\langle S, \alpha \rangle$, with domain:
$$\mathcal{D}_{\langle S, \alpha \rangle} = \{g \mid g: S \to \mathcal{D}_\alpha\}.$$

Applying the function type construction:
$$\mathcal{S}_{\mathcal{D}_{\langle S, \alpha \rangle}} = \operatorname{Hom}(\mathcal{S}_S, \mathcal{S}_{\mathcal{D}_\alpha}).$$

For each intension $g: S \to \mathcal{D}_\alpha$, the corresponding linear map $h_{\langle S, \alpha \rangle}(g): \mathcal{S}_S \to \mathcal{S}_{\mathcal{D}_\alpha}$ acts on basis vectors by:
$$h_{\langle S, \alpha \rangle}(g)(\mathbf{b}_s) = h_\alpha(g(s)).$$

\begin{example}[Propositions]
A proposition $p: S \to \mathcal{D}_t$ maps to a linear operator $h_{\langle S, t \rangle}(p): \mathcal{S}_S \to \mathcal{S}_{\mathcal{D}_t}$ defined by:
$$h_{\langle S, t \rangle}(p)(\mathbf{b}_s) = \begin{cases} \mathbf{b}_1 & \text{if } p(s) = 1 \\ \mathbf{b}_0 & \text{if } p(s) = 0 \end{cases}$$
The proposition is thus encoded as a linear map that classifies each index according to its truth value there.
\end{example}

\begin{example}[Individual concepts]
An individual concept $c: S \to \mathcal{D}_e$ (e.g., ``the President of the United States'') maps to $h_{\langle S, e \rangle}(c): \mathcal{S}_S \to \mathcal{S}_{\mathcal{D}_e}$ sending:
$$h_{\langle S, e \rangle}(c)(\mathbf{b}_s) = \mathbf{b}_{c(s)}.$$
The concept is encoded as a linear map that returns the vector representation of the individual who satisfies the concept at each index.
\end{example}

\begin{example}[Properties]
A property $P: S \to (\mathcal{D}_e \to \mathcal{D}_t)$ maps to a linear operator:
$$h_{\langle S, \langle e, t \rangle \rangle}(P): \mathcal{S}_S \to \operatorname{Hom}(\mathcal{S}_{\mathcal{D}_e}, \mathcal{S}_{\mathcal{D}_t}).$$
For each index $s$, the extension $P(s)$ is a characteristic function on entities; the vector representation $h_{\langle S, \langle e, t \rangle \rangle}(P)(\mathbf{b}_s)$ is the linear operator encoding that characteristic function.
\end{example}

Some intensional frameworks allow the entity domain to vary across indices: for each $s \in S$, there is a (possibly proper) subset $\mathcal{D}_e(s) \subseteq \mathcal{D}_e$ of entities that ``exist'' at $s$. To accommodate this within our framework:
\begin{itemize}
    \item we take $\mathcal{D}_e = \bigcup_{s \in S} \mathcal{D}_e(s)$ as the global entity domain;
    \item embed all entities into $\mathcal{S}_{\mathcal{D}_e}$ via $h_e$ as before;
    \item existence predicates encoded as properties of type $\langle S, \langle e, t \rangle \rangle$.
\end{itemize}
The injectivity of $h_e$ on the global domain ensures that distinct entities receive distinct vector representations, even if they exist at different indices.

\begin{remark}[Tensor product formulation]\label{rem:tensor}
The construction above defines $\mathcal{S}_{\mathcal{D}_{\langle \tau_1, \tau_2 \rangle}} = \operatorname{Hom}(\mathcal{S}_{\mathcal{D}_{\tau_1}}, \mathcal{S}_{\mathcal{D}_{\tau_2}})$. An equivalent formulation employs the tensor product. For finite-dimensional spaces, there is a canonical isomorphism:
$$\operatorname{Hom}(\mathcal{S}_{\mathcal{D}_{\tau_1}}, \mathcal{S}_{\mathcal{D}_{\tau_2}}) \cong \mathcal{S}_{\mathcal{D}_{\tau_1}}^* \otimes \mathcal{S}_{\mathcal{D}_{\tau_2}},$$
where $\mathcal{S}_{\mathcal{D}_{\tau_1}}^*$ denotes the dual space of $\mathcal{S}_{\mathcal{D}_{\tau_1}}$. Under this isomorphism, a function $f \in \mathcal{D}_{\langle \tau_1, \tau_2 \rangle}$ corresponds to the tensor:
$$h_\tau(f) \longleftrightarrow \sum_{a \in \mathcal{D}_{\tau_1}} \mathbf{b}_a^* \otimes h_{\tau_2}(f(a)),$$
where $\{\mathbf{b}_a^*\}_{a \in \mathcal{D}_{\tau_1}}$ is the dual basis satisfying $\mathbf{b}_a^*(\mathbf{b}_{a'}) = \delta_{a,a'}$.

This tensor representation connects directly to the matrix encodings of logical operators in vector logic: a predicate $P \in \mathcal{D}_{\langle e, t \rangle}$ corresponds to a matrix whose columns are $h_t(P(x))$ for each $x \in \mathcal{D}_e$.

Similarly, the compound index space admits a tensor decomposition. Rather than treating $S = \prod_{\sigma \in \Sigma} \mathcal{D}_\sigma$ as primitive, one may construct:
$$\mathcal{S}_S \cong \bigotimes_{\sigma \in \Sigma} \mathcal{S}_{\mathcal{D}_\sigma},$$
with the embedding:
$$h_S(s) = \bigotimes_{\sigma \in \Sigma} h_\sigma(s_\sigma) \quad \text{for } s = (s_\sigma)_{\sigma \in \Sigma}.$$

For finite index sets, the direct basis approach and the tensor product approach yield isomorphic vector spaces. The tensor formulation makes the compositional structure of compound indices explicit, and allows accessibility relations to be represented as operators acting on individual index dimensions. The direct basis approach, which we adopt here, simplifies the exposition, and suffices for establishing the homomorphism.

For infinite-dimensional spaces, the isomorphism $\operatorname{Hom}(V, W) \cong V^* \otimes W$ requires topological refinement, specifically, restriction to continuous linear maps and the projective tensor product, or to Hilbert-Schmidt operators in the Hilbert space setting \cite{Conway2007}. The $\operatorname{Hom}$ formulation remains valid without such assumptions, which is why we adopt it as the primary construction.
\end{remark}

\subsection{Lifted maps}\label{subsec:maps}

Having constructed the vector spaces $\mathcal{S}_{\mathcal{D}_\tau}$ and injective embeddings $h_\tau$ for each type $\tau \in \mathcal{T}$, we now establish that semantic functions in $\mathcal{M}_{int}$ lift to (multi)linear maps in $\mathcal{M}_{\mathcal{S}}$, such that the requisite diagrams commute.

\begin{lemma}[Existence of lifted maps]\label{lem:lift}
Let $f: \mathcal{D}_{\tau_1} \times \mathcal{D}_{\tau_2} \times \cdots \times \mathcal{D}_{\tau_n} \to \mathcal{D}_{\tau_{n+1}}$ be an intensional semantic function. Then there exists a unique map
$$f': \IM(h_{\tau_1}) \times \IM(h_{\tau_2}) \times \cdots \times \IM(h_{\tau_n}) \to \IM(h_{\tau_{n+1}})$$
satisfying, for all $a_i \in \mathcal{D}_{\tau_i}$:
$$h_{\tau_{n+1}}(f(a_1, a_2, \ldots, a_n)) = f'(h_{\tau_1}(a_1), h_{\tau_2}(a_2), \ldots, h_{\tau_n}(a_n)).$$
\end{lemma}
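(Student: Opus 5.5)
The plan is to define $f'$ by transport of structure through the inverses of the embeddings, exactly as in the extensional case reviewed in Section~\ref{sec:extensional}. The only input needed is that each $h_{\tau_i}$ is injective. For primitive types, including index sorts and the compound index space $S$, this was established directly in Subsection~\ref{subsec:types}. For function types it is Lemma~\ref{lem:injfunc}, applied by induction on the structure of types. Consequently, each corestriction $h_{\tau_i}: \mathcal{D}_{\tau_i} \to \IM(h_{\tau_i})$ is a bijection with a well-defined inverse $h_{\tau_i}^{-1}: \IM(h_{\tau_i}) \to \mathcal{D}_{\tau_i}$.

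For existence, set
$$f'(\mathbf{v}_1, \ldots, \mathbf{v}_n) = h_{\tau_{n+1}}\bigl(f(h_{\tau_1}^{-1}(\mathbf{v}_1), \ldots, h_{\tau_n}^{-1}(\mathbf{v}_n))\bigr)$$
for $\mathbf{v}_i \in \IM(h_{\tau_i})$. Each $\mathbf{v}_i$ has a unique preimage, so this is well defined. Its value lies in $\IM(h_{\tau_{n+1}})$ by construction. Substituting $\mathbf{v}_i = h_{\tau_i}(a_i)$ and using $h_{\tau_i}^{-1}\circ h_{\tau_i} = \mathrm{id}$ gives the required identity at once.

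For uniqueness, suppose $f''$ is another map with the same property. Any tuple $(\mathbf{v}_1,\ldots,\mathbf{v}_n)$ in the product of images equals $(h_{\tau_1}(a_1), \ldots, h_{\tau_n}(a_n))$ for some $a_i$, by the definition of image. Then
$$f''(\mathbf{v}_1,\ldots,\mathbf{v}_n) = h_{\tau_{n+1}}(f(a_1,\ldots,a_n)) = f'(\mathbf{v}_1,\ldots,\mathbf{v}_n).$$
Uniqueness holds only because the domain is restricted to images. Off the image, the map is unconstrained, which is why the statement is phrased on $\IM(h_{\tau_i})$.

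The main subtlety is not in this lemma itself. It lies in keeping the bookkeeping honest: the lemma asserts only a set-theoretic map, not (multi)linearity. The images $\IM(h_\tau)$ are sets of basis vectors or of specific operators, not subspaces. I would therefore leave linearity to a subsequent step, where $f'$ is extended multilinearly by prescribing its values on the tensor basis $\mathbf{b}_{a_1}\otimes\cdots\otimes\mathbf{b}_{a_n}$, as in the matrix encodings of logical operators. In the present proof I would only verify that the induction supplying injectivity for all $\tau \in \mathcal{T}$, including the intensional types $\langle S,\alpha\rangle$, is complete.
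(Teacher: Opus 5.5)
Your proposal is correct and follows the paper's proof essentially verbatim. The paper also defines $f'(\mathbf{v}_1,\ldots,\mathbf{v}_n) := h_{\tau_{n+1}}(f(h_{\tau_1}^{-1}(\mathbf{v}_1),\ldots,h_{\tau_n}^{-1}(\mathbf{v}_n)))$ using the inverses of the injective embeddings on their images, verifies commutation by substituting $\mathbf{v}_i = h_{\tau_i}(a_i)$, and obtains uniqueness because every tuple in the product of images has this form; like you, it defers multilinearity to a separate step (Lemma~\ref{lem:multilin}).
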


Equivalently:
$$h_{\tau_{n+1}} \circ f = f' \circ (h_{\tau_1} \times h_{\tau_2} \times \cdots \times h_{\tau_n}).$$

The following diagram commutes:
\[\begin{tikzcd}[column sep=small]
    {\mathcal{D}_{\tau_1} \times \mathcal{D}_{\tau_2} \times \cdots \times \mathcal{D}_{\tau_n}} && {\mathcal{D}_{\tau_{n+1}}} \\
    \\
    {\IM(h_{\tau_1}) \times \IM(h_{\tau_2}) \times \cdots \times \IM(h_{\tau_n})} && {\IM(h_{\tau_{n+1}})}
    \arrow["f", from=1-1, to=1-3]
    \arrow["{h_{\tau_1} \times h_{\tau_2} \times \cdots \times h_{\tau_n}}"', from=1-1, to=3-1]
    \arrow["{h_{\tau_{n+1}}}", from=1-3, to=3-3]
    \arrow["{f'}"', from=3-1, to=3-3]
\end{tikzcd}\]

\begin{proof}
Since each $h_{\tau_i}$ is injective, there exists a well-defined left inverse on its image:
$$h_{\tau_i}^{-1}: \IM(h_{\tau_i}) \to \mathcal{D}_{\tau_i}$$
satisfying $h_{\tau_i}^{-1}(h_{\tau_i}(x)) = x$ for all $x \in \mathcal{D}_{\tau_i}$. Note that this inverse is defined only on $\IM(h_{\tau_i}) \subseteq \mathcal{S}_{\mathcal{D}_{\tau_i}}$, not on the full vector space.

Given any tuple $(\mathbf{v}_1, \mathbf{v}_2, \ldots, \mathbf{v}_n) \in \IM(h_{\tau_1}) \times \IM(h_{\tau_2}) \times \cdots \times \IM(h_{\tau_n})$, there exists a unique tuple $(a_1, a_2, \ldots, a_n) \in \mathcal{D}_{\tau_1} \times \mathcal{D}_{\tau_2} \times \cdots \times \mathcal{D}_{\tau_n}$ such that $\mathbf{v}_i = h_{\tau_i}(a_i)$ for each $i$.

Define $f'$ by:
$$f'(\mathbf{v}_1, \mathbf{v}_2, \ldots, \mathbf{v}_n) := h_{\tau_{n+1}}\left(f\left(h_{\tau_1}^{-1}(\mathbf{v}_1), h_{\tau_2}^{-1}(\mathbf{v}_2), \ldots, h_{\tau_n}^{-1}(\mathbf{v}_n)\right)\right).$$

Equivalently, if $\mathbf{v}_i = h_{\tau_i}(a_i)$, then:
$$f'(h_{\tau_1}(a_1), h_{\tau_2}(a_2), \ldots, h_{\tau_n}(a_n)) = h_{\tau_{n+1}}(f(a_1, a_2, \ldots, a_n)).$$

We verify commutation. The left-hand side of the desired equation is:
$$h_{\tau_{n+1}}(f(a_1, a_2, \ldots, a_n)),$$
obtained by applying $f$ in $\mathcal{M}_{int}$ and embedding the result via $h_{\tau_{n+1}}$.

The right-hand side, by definition of $f'$, is:
$$f'(h_{\tau_1}(a_1), \ldots, h_{\tau_n}(a_n)) = h_{\tau_{n+1}}\left(f\left(h_{\tau_1}^{-1}(h_{\tau_1}(a_1)), \ldots, h_{\tau_n}^{-1}(h_{\tau_n}(a_n))\right)\right).$$

Since $h_{\tau_i}^{-1}(h_{\tau_i}(a_i)) = a_i$, this simplifies to:
$$h_{\tau_{n+1}}(f(a_1, a_2, \ldots, a_n)).$$

Thus left-hand side equals right-hand side, and the diagram commutes by construction. Uniqueness of $f'$ follows from the fact that its value on each tuple in the image is uniquely determined by the commutation condition.
\end{proof}

The lifted map $f'$ respects the linear structure of the vector spaces.

\begin{lemma}[Multilinearity of lifted maps]\label{lem:multilin}
Let $f': \IM(h_{\tau_1}) \times \cdots \times \IM(h_{\tau_n}) \to \IM(h_{\tau_{n+1}})$ be the lifted map constructed in Lemma~\ref{lem:lift}. Then $f'$ extends uniquely to a multilinear map:
$$\tilde{f}: \operatorname{span}(\IM(h_{\tau_1})) \times \cdots \times \operatorname{span}(\IM(h_{\tau_n})) \to \mathcal{S}_{\mathcal{D}_{\tau_{n+1}}}.$$
\end{lemma}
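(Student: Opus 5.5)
The plan is to show that the image of each $h_{\tau_i}$ is linearly independent, and then use the universal property of bases to extend $f'$ multilinearly, one coordinate at a time. For the primitive types $e$, $t$, each index sort $\sigma$, and the compound index space $S$, $\IM(h_{\tau_i})$ is by construction a set of orthonormal basis vectors. It is therefore linearly independent, and it forms a Hamel basis of its algebraic span $\operatorname{span}(\IM(h_{\tau_i}))$, which consists of finite linear combinations only.

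The key step is this. Given a linearly independent set $B_i = \IM(h_{\tau_i})$ for each $i$, every element of $\operatorname{span}(B_1)\times\cdots\times\operatorname{span}(B_n)$ is a tuple $(\sum_{a_1} c^{(1)}_{a_1} h_{\tau_1}(a_1),\ldots,\sum_{a_n} c^{(n)}_{a_n} h_{\tau_n}(a_n))$. Each sum is finite and its coefficients are uniquely determined. Define
\[
\tilde f\Bigl(\textstyle\sum_{a_1} c^{(1)}_{a_1} h_{\tau_1}(a_1),\ldots\Bigr) := \sum_{a_1,\ldots,a_n} c^{(1)}_{a_1}\cdots c^{(n)}_{a_n}\, f'(h_{\tau_1}(a_1),\ldots,h_{\tau_n}(a_n)),
\]
a finite sum in $\mathcal{S}_{\mathcal{D}_{\tau_{n+1}}}$. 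This is well defined because the coefficients are unique. It agrees with $f'$ on tuples of image vectors, and it is linear in each argument with the others fixed, since the formula is linear in each coefficient family. For uniqueness: any multilinear $g$ agreeing with $f'$ on $B_1\times\cdots\times B_n$ must satisfy the same expansion by multilinearity, so $g=\tilde f$. This is the standard fact that a multilinear map is determined by its values on tuples of basis elements.

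The main obstacle is the linear independence of $\IM(h_\tau)$ for function types $\tau=\langle\tau_1,\tau_2\rangle$. Here the images $h_\tau(f)$ are operators, and they are typically \emph{not} independent. For example, with $\mathcal{D}_{\tau_1}$ of size $2$ and $\tau_2=t$, there are four functions whose matrices live in a $4$-dimensional space, and they satisfy $h(f_{11})+h(f_{00})=h(f_{10})+h(f_{01})$. In that situation the extension need not be well defined, so the statement as written needs care. I would first try to handle it by working in the free vector space on each $\IM(h_{\tau_i})$, that is, formal combinations $\bigoplus_{v\in\IM(h_{\tau_i})}\mathbb{R}v$. On this space the argument above goes through verbatim, and the result is the unique multilinear extension there, together with the map to $\mathcal{S}_{\mathcal{D}_{\tau_{n+1}}}$. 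For primitive types, and for the index-driven cases the paper mainly uses (intensions evaluated at $\mathbf{b}_s$), the free span coincides with the actual span because the images are orthonormal. I would state the lemma for that case and record the function-type case as a caveat in which the well-definedness of $\tilde f$ requires that $f$ respect the linear relations among the $h_{\tau_i}(a)$, or else that one use the free-span reading.
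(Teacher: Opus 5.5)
On primitive argument types your argument is the paper's: expand each argument in the image vectors and define $\tilde f$ by the multilinear formula. Well-definedness comes from uniqueness of coefficients, and uniqueness of $\tilde f$ from the fact that the images span. You are also right to insist on the algebraic span. For infinite $\mathcal{D}_{\tau_i}$ it is a dense proper subspace of the Hilbert space, not the whole space as the paper's proof asserts, though this does not affect the statement.

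Where you diverge from the paper, you are right and the paper is not. Its proof asserts that $\IM(h_{\tau_i})$ is linearly independent ``in either case'', primitive or function type, and that is exactly the step you reject. Your example checks out. With $\mathcal{D}_{\tau_1}=\{a,b\}$ and $P_{xy}$ the function with $P(a)=x$, $P(b)=y$, both $h(P_{11})+h(P_{00})$ and $h(P_{10})+h(P_{01})$ equal the all-ones $2\times 2$ matrix.

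You can sharpen this: what fails is existence, not just the extension formula, so the lemma as stated is false rather than merely under-proved. Take $n=1$, $\tau_1=\langle e,t\rangle$, $\tau_2=t$, $\mathcal{D}_e=\{a,b\}$, and let $f$ be ``every'', i.e.\ $f(P)=1$ iff $P=P_{11}$. Writing $h$ for $h_{\langle e,t\rangle}$, any linear $\tilde f$ extending $f'$ would give
\[
\mathbf{b}_1+\mathbf{b}_0=\tilde f(h(P_{11}))+\tilde f(h(P_{00}))=\tilde f(h(P_{10}))+\tilde f(h(P_{01}))=\mathbf{b}_0+\mathbf{b}_0,
\]
which is impossible. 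Replace $e$ by a two-element $S$. The same relation then shows that $\Box_w$ under universal accessibility, viewed as a map $\mathcal{D}_{\langle S,t\rangle}\to\mathcal{D}_{\langle S,t\rangle}$, has no linear extension. This is consistent with the nonlinear thresholds the paper itself needs in Section~\ref{sec:modalops}.

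Uniqueness always survives, because the images span. Existence holds exactly under the condition you name: for each $i$ and each fixed choice of the other arguments from the images, $h_{\tau_i}(a)\mapsto f'(\ldots,h_{\tau_i}(a),\ldots)$ must respect the linear relations among $\IM(h_{\tau_i})$. For $\tau_i=\langle S,t\rangle$ with $S$ finite this is concrete. Here $\operatorname{span}(\IM(h_{\tau_i}))$ is the $(|S|+1)$-dimensional space of $2\times|S|$ matrices with equal column sums, and the condition says the map is affine in the truth vector $\mathbf{v}(p)$.

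Two refinements of your caveat. First, suppose by ``intensions evaluated at $\mathbf{b}_s$'' you mean the evaluation map $(g,s)\mapsto g(s)$. It does extend, but not because of orthonormality. Its first argument ranges over the dependent set $\IM(h_{\langle S,\alpha\rangle})$, and it extends because $(M,\mathbf{v})\mapsto M\mathbf{v}$ is bilinear on all of $\operatorname{Hom}(\mathcal{S}_S,\mathcal{S}_{\mathcal{D}_\alpha})\times\mathcal{S}_S$; that is, it meets your relation-respecting condition. Second, the free-vector-space version is just the universal property of free vector spaces. Its sums are formal, not sums of operators in $\mathcal{S}_{\mathcal{D}_{\tau_i}}$, so it reformulates the lemma rather than proving it.

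The defensible statement is the one you end with: unconditional when all argument types are primitive, and under the relation-respecting hypothesis otherwise.
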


Recall that a map $F: \mathcal{S}_{\mathcal{D}_{\tau_1}} \times \cdots \times \mathcal{S}_{\mathcal{D}_{\tau_n}} \to \mathcal{S}_{\mathcal{D}_{\tau_{n+1}}}$ is multilinear if, for each coordinate $i \in \{1, 2, \ldots, n\}$, fixing all others, $F$ is linear in the $i$-th argument. That is, for all scalars $\alpha, \beta \in \mathbb{R}$, vectors $\mathbf{u}_i, \mathbf{v}_i \in \mathcal{S}_{\mathcal{D}_{\tau_i}}$, and $\mathbf{x}_j \in \mathcal{S}_{\mathcal{D}_{\tau_j}}$ for $j \neq i$:
$$F(\mathbf{x}_1, \ldots, \alpha \mathbf{u}_i + \beta \mathbf{v}_i, \ldots, \mathbf{x}_n) = \alpha F(\mathbf{x}_1, \ldots, \mathbf{u}_i, \ldots, \mathbf{x}_n) + \beta F(\mathbf{x}_1, \ldots, \mathbf{v}_i, \ldots, \mathbf{x}_n).$$

\begin{proof}
The images $\IM(h_{\tau_i})$ consist of basis vectors $\{\mathbf{b}_a \mid a \in \mathcal{D}_{\tau_i}\}$ (for primitive types) or linear operators defined on basis vectors (for function types). In either case, $\IM(h_{\tau_i})$ forms a linearly independent set in $\mathcal{S}_{\mathcal{D}_{\tau_i}}$.

For primitive types, the span of $\IM(h_{\tau_i})$ is the full space $\mathcal{S}_{\mathcal{D}_{\tau_i}}$ (since the image consists of all bases). For function types $\tau_i = \langle \alpha, \beta \rangle$, the span of $\IM(h_{\tau_i})$ is a subspace of $\operatorname{Hom}(\mathcal{S}_{\mathcal{D}_\alpha}, \mathcal{S}_{\mathcal{D}_\beta})$.

Any element $\mathbf{v} \in \operatorname{span}(\IM(h_{\tau_i}))$ can be written as a finite linear combination:
$$\mathbf{v} = \sum_{a \in A} \lambda_a \cdot h_{\tau_i}(a),$$
where $A \subseteq \mathcal{D}_{\tau_i}$ is finite and $\lambda_a \in \mathbb{R}$.

We extend $f'$ to $\tilde{f}$ by multilinear extension. For $\mathbf{v}_i = \sum_{a \in A_i} \lambda_a^{(i)} \cdot h_{\tau_i}(a)$ in each coordinate:
$$\tilde{f}(\mathbf{v}_1, \ldots, \mathbf{v}_n) := \sum_{a_1 \in A_1} \cdots \sum_{a_n \in A_n} \lambda_{a_1}^{(1)} \cdots \lambda_{a_n}^{(n)} \cdot f'(h_{\tau_1}(a_1), \ldots, h_{\tau_n}(a_n)).$$

This is well-defined because $f'$ is defined on all tuples of image elements, and the extension respects linear combinations by construction. Linearity in each coordinate follows directly: fixing all coordinates except the $i$-th, and taking a linear combination in that coordinate, yields the corresponding linear combination in the output. Well-definedness of $\tilde{f}$ follows from the linear independence of $\IM(h_{\tau_i})$: each $\mathbf{v}\in\operatorname{span}(\IM(h_{\tau_i}))$ has a unique representation as a finite linear combination of image elements, so the extension formula yields a unique value.

On the image $\IM(h_{\tau_1}) \times \cdots \times \IM(h_{\tau_n})$, we have $\tilde{f} = f'$, so the extension is indeed an extension. Uniqueness follows from the fact that a multilinear map is determined by its values on basis elements.
\end{proof}

\begin{remark}
For $n = 1$, multilinearity reduces to ordinary linearity, and $f': \IM(h_{\tau_1}) \to \IM(h_{\tau_2})$ extends to a linear map on the span. This corresponds to the representation of semantic functions as linear operators, connecting to the matrix representations in vector logic.

For $n = 2$, the lifted map is bilinear. This is the relevant case for function application: if $f$ represents application of a function of type $\langle \alpha, \beta \rangle$ to an argument of type $\alpha$, the bilinearity of $f'$ reflects the distributive interaction between function and argument in the vector model.
\end{remark}

\begin{remark}
    The intensional domains themselves do not (necessarily) contain ``all real numbers'' as first-order objects; $\alpha, \beta$ appear in the meta-theory once we embed each typed domain $\mathcal{D}_\tau$ into a vector space $\mathcal{S}_{\mathcal{D}_\tau}$. We want these domain objects (especially function-type objects) to act as vectors that can be scaled and added in the sub-symbolic or ``distributional'' sense. The injection $h_\tau: \mathcal{D}_\tau \rightarrow \mathcal{S}_{\mathcal{D}_\tau}$ ensures that if $\tau$ is a function type, then combining intensional objects $\alpha b+\beta c \in \mathcal{D}_\tau$ corresponds to combining their images in $\operatorname{Hom}(\mathcal{S}_{\mathcal{D}_{\tau_1}}, \mathcal{S}_{\mathcal{D}_{\tau_2}})$. Hence, $\alpha, \beta$ are not ``intensional constants'' inside the logic, as it were; they are scalars in the background field that let us check multilinearity in the vector-space representation at all.
\end{remark}

\subsection{Composition}\label{subsec:composition}

We now establish that composition of semantic functions in $\mathcal{M}_{int}$ corresponds exactly to composition of their lifted maps in $\mathcal{M}_{\mathcal{S}}$. 

\begin{lemma}[Composition of two functions]\label{lem:comp2}
Let $\rho, \sigma, \tau$ be types in $\mathcal{M}_{int}$, and let
$$g: \mathcal{D}_\rho \to \mathcal{D}_\sigma \quad \text{and} \quad f: \mathcal{D}_\sigma \to \mathcal{D}_\tau$$
be semantic functions. Let $h_\rho, h_\sigma, h_\tau$ be the corresponding injective embeddings, and
$$g': \IM(h_\rho) \to \IM(h_\sigma) \quad \text{and} \quad f': \IM(h_\sigma) \to \IM(h_\tau)$$
be the lifted maps satisfying:
$$g'(h_\rho(x)) = h_\sigma(g(x)) \quad \text{and} \quad f'(h_\sigma(y)) = h_\tau(f(y))$$
for all $x \in \mathcal{D}_\rho$ and $y \in \mathcal{D}_\sigma$.

Then composition is preserved: for all $x \in \mathcal{D}_\rho$,
$$(f' \circ g')(h_\rho(x)) = h_\tau((f \circ g)(x)).$$

\end{lemma}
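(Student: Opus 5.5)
The plan is a direct element-chase through the two commuting squares supplied by Lemma~\ref{lem:lift}, applied once to $g$ and once to $f$. First I will check that the composite $f' \circ g'$ is well-defined on $\IM(h_\rho)$. By construction $g'$ takes values in $\IM(h_\sigma)$, since $g'(h_\rho(x)) = h_\sigma(g(x))$. This is exactly the domain of $f'$, so no extension (for instance the multilinear extension of Lemma~\ref{lem:multilin}) is needed, and the composite is simply an ordinary function $\IM(h_\rho) \to \IM(h_\tau)$.

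Next, fix $x \in \mathcal{D}_\rho$ and set $y := g(x) \in \mathcal{D}_\sigma$. The hypothesis on $g'$ gives $(f'\circ g')(h_\rho(x)) = f'(g'(h_\rho(x))) = f'(h_\sigma(y))$. The hypothesis on $f'$, instantiated at this particular $y$, then gives $f'(h_\sigma(y)) = h_\tau(f(y)) = h_\tau(f(g(x))) = h_\tau((f\circ g)(x))$, which is the claim. Diagrammatically, this amounts to pasting the two commuting squares for $g$ and $f$ along their shared edge $h_\sigma$.

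As a consistency remark, I will also note the following. By the uniqueness clause of Lemma~\ref{lem:lift}, the lift $(f\circ g)'$ of the composite must coincide with $f'\circ g'$ on $\IM(h_\rho)$, because both satisfy the same commutation condition. So "lift of composite equals composite of lifts." This is also the base step of an induction on the number of factors for longer chains, exactly as in the extensional composition theorem.

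I do not expect a genuine obstacle. The only point needing care is the first step: making sure the image of $g'$ lands inside $\IM(h_\sigma)$ and not merely in $\mathcal{S}_{\mathcal{D}_\sigma}$, so that applying $f'$ is legitimate. This is guaranteed by the codomain stated in Lemma~\ref{lem:lift}.
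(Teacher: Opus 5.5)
Your proof is correct and matches the paper's argument: both apply the defining property of $g'$, then instantiate the defining property of $f'$ at $y = g(x)$, giving the chain $(f'\circ g')(h_\rho(x)) = f'(h_\sigma(g(x))) = h_\tau(f(g(x)))$. Your well-definedness check and your uniqueness remark identifying $(f\circ g)'$ with $f'\circ g'$ are sound additions that the paper's proof does not spell out.
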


The following diagram commutes:
\[\begin{tikzcd}[column sep=small]
    {\mathcal{D}_\rho} && {\mathcal{D}_\sigma} && {\mathcal{D}_\tau} \\
    \\
    {\IM(h_\rho)} && {\IM(h_\sigma)} && {\IM(h_\tau)}
    \arrow["g", from=1-1, to=1-3]
    \arrow["{h_\rho}"', from=1-1, to=3-1]
    \arrow["f", from=1-3, to=1-5]
    \arrow["{h_\sigma}"{description}, from=1-3, to=3-3]
    \arrow["{h_\tau}", from=1-5, to=3-5]
    \arrow["{g'}"', from=3-1, to=3-3]
    \arrow["{f'}"', from=3-3, to=3-5]
\end{tikzcd}\]

\begin{proof}
By definition of $g'$:
$$g'(h_\rho(x)) = h_\sigma(g(x)).$$
Thus the image of $h_\rho(x)$ under $g'$ is the vector in $\mathcal{S}_{\mathcal{D}_\sigma}$ corresponding to $g(x)$ via $h_\sigma$.

We now apply $f'$ to this result. By definition of $f'$, for all $y \in \mathcal{D}_\sigma$:
$$f'(h_\sigma(y)) = h_\tau(f(y)).$$
Taking $y = g(x)$:
\begin{align*}
f'(g'(h_\rho(x))) &= f'(h_\sigma(g(x))) \\
&= h_\tau(f(g(x))).
\end{align*}

Hence:
\begin{align*}
(f' \circ g')(h_\rho(x)) &= f'(g'(h_\rho(x))) \\
&= f'(h_\sigma(g(x))) \\
&= h_\tau(f(g(x))) \\
&= h_\tau((f \circ g)(x)).
\end{align*}
\end{proof}

\begin{theorem}[Composition preservation]\label{thm:compn}
Let $\tau_1, \tau_2, \ldots, \tau_{n+1}$ be types in $\mathcal{M}_{int}$, and let
$$f_1: \mathcal{D}_{\tau_1} \to \mathcal{D}_{\tau_2}, \quad f_2: \mathcal{D}_{\tau_2} \to \mathcal{D}_{\tau_3}, \quad \ldots, \quad f_n: \mathcal{D}_{\tau_n} \to \mathcal{D}_{\tau_{n+1}}$$
be a sequence of semantic functions. Let $h_{\tau_i}: \mathcal{D}_{\tau_i} \to \mathcal{S}_{\mathcal{D}_{\tau_i}}$ be the injective embeddings, and let
$$f_i': \IM(h_{\tau_i}) \to \IM(h_{\tau_{i+1}})$$
be the lifted maps satisfying $f_i'(h_{\tau_i}(x)) = h_{\tau_{i+1}}(f_i(x))$ for all $x \in \mathcal{D}_{\tau_i}$.

Then for any $x \in \mathcal{D}_{\tau_1}$:
$$h_{\tau_{n+1}}((f_n \circ f_{n-1} \circ \cdots \circ f_1)(x)) = (f_n' \circ f_{n-1}' \circ \cdots \circ f_1')(h_{\tau_1}(x)).$$

\end{theorem}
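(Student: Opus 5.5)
The plan is to argue by induction on the length $n$ of the chain, with Lemma~\ref{lem:comp2} doing the work in the inductive step. For $n = 1$ the statement is exactly the defining property of the single lift, $f_1'(h_{\tau_1}(x)) = h_{\tau_2}(f_1(x))$, which Lemma~\ref{lem:lift} guarantees. (For $n=2$ the claim is literally Lemma~\ref{lem:comp2}, so the base case could also start there.)

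For the inductive step, assume the claim holds for chains of length $n-1$. Write $F_{n-1} := f_{n-1} \circ \cdots \circ f_1 : \mathcal{D}_{\tau_1} \to \mathcal{D}_{\tau_n}$ and $F'_{n-1} := f'_{n-1} \circ \cdots \circ f'_1$. Before using $F'_{n-1}$ I must check that it is a map $\IM(h_{\tau_1}) \to \IM(h_{\tau_n})$. This holds because each $f_i'$ has codomain $\IM(h_{\tau_{i+1}})$, which is exactly the domain of $f_{i+1}'$, so the composite is well defined.

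By the induction hypothesis, $F'_{n-1}(h_{\tau_1}(x)) = h_{\tau_n}(F_{n-1}(x))$ for all $x$. So $F'_{n-1}$ is a lift of $F_{n-1}$ in the sense required by the hypotheses of Lemma~\ref{lem:comp2}. I then apply that lemma with $\rho = \tau_1$, $\sigma = \tau_n$, $\tau = \tau_{n+1}$, $g = F_{n-1}$, $g' = F'_{n-1}$, $f = f_n$ and $f' = f_n'$. This gives
$$(f_n' \circ F'_{n-1})(h_{\tau_1}(x)) = h_{\tau_{n+1}}((f_n \circ F_{n-1})(x)),$$
which is the claimed identity, using associativity of function composition on both sides. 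Alternatively, one can compute directly: $f_n'(F'_{n-1}(h_{\tau_1}(x))) = f_n'(h_{\tau_n}(F_{n-1}(x))) = h_{\tau_{n+1}}(f_n(F_{n-1}(x)))$.

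There is no real obstacle here. The only point needing care is the bookkeeping above: Lemma~\ref{lem:comp2} is stated for lifts satisfying a commutation property, so I must confirm that the composite lift $F'_{n-1}$ satisfies that property, which the induction hypothesis provides. I would also briefly note that, by the uniqueness clause of Lemma~\ref{lem:lift}, $F'_{n-1}$ coincides with the lift of $F_{n-1}$ constructed directly. This shows that lifting commutes with composition, that is, $(f_n \circ \cdots \circ f_1)' = f_n' \circ \cdots \circ f_1'$.
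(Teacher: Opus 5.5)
Your proposal is correct and follows essentially the paper's proof: induction on $n$, with the base case given by the defining property of $f_1'$ and an inductive step that substitutes the induction hypothesis into the defining property of the last lift. The paper writes out this computation inline, exactly as in your ``direct computation,'' rather than routing it through Lemma~\ref{lem:comp2}, and your remarks on the composite being well defined and on uniqueness (so that $(f_n\circ\cdots\circ f_1)' = f_n'\circ\cdots\circ f_1'$) are correct additions the paper leaves implicit.
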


The following diagram commutes:
\[\begin{tikzcd}[column sep=small]
    {\mathcal{D}_{\tau_1}} && {\mathcal{D}_{\tau_2}} && \cdots && {\mathcal{D}_{\tau_n}} && {\mathcal{D}_{\tau_{n+1}}} \\
    \\
    {\IM(h_{\tau_1})} && {\IM(h_{\tau_2})} && \cdots && {\IM(h_{\tau_n})} && {\IM(h_{\tau_{n+1}})}
    \arrow["{f_1}", from=1-1, to=1-3]
    \arrow["{h_{\tau_1}}"', from=1-1, to=3-1]
    \arrow["{f_2}", from=1-3, to=1-5]
    \arrow["{h_{\tau_2}}"{description}, from=1-3, to=3-3]
    \arrow["{f_{n-1}}", from=1-5, to=1-7]
    \arrow["{f_n}", from=1-7, to=1-9]
    \arrow["{h_{\tau_n}}"{description}, from=1-7, to=3-7]
    \arrow["{h_{\tau_{n+1}}}", from=1-9, to=3-9]
    \arrow["{f_1'}"', from=3-1, to=3-3]
    \arrow["{f_2'}"', from=3-3, to=3-5]
    \arrow["{f_{n-1}'}"', from=3-5, to=3-7]
    \arrow["{f_n'}"', from=3-7, to=3-9]
\end{tikzcd}\]

\begin{proof}
The proof proceeds by induction on $n$.

Base case $n = 1$; we have a single function $f_1: \mathcal{D}_{\tau_1} \to \mathcal{D}_{\tau_2}$ with lifted map $f_1': \IM(h_{\tau_1}) \to \IM(h_{\tau_2})$ satisfying:
$$f_1'(h_{\tau_1}(x)) = h_{\tau_2}(f_1(x))$$
for all $x \in \mathcal{D}_{\tau_1}$. This is precisely the defining property of $f_1'$, so the base case holds trivially.

Inductive hypothesis; assume that for some $n = k \geq 1$, for any chain of $k$ functions $f_1, f_2, \ldots, f_k$ with lifted maps $f_1', f_2', \ldots, f_k'$, we have for all $x \in \mathcal{D}_{\tau_1}$:
$$h_{\tau_{k+1}}((f_k \circ \cdots \circ f_1)(x)) = (f_k' \circ \cdots \circ f_1')(h_{\tau_1}(x)).$$

Consider a chain of $k + 1$ functions $f_1, \ldots, f_k, f_{k+1}$. By the inductive hypothesis applied to the first $k$ functions:
$$h_{\tau_{k+1}}((f_k \circ \cdots \circ f_1)(x)) = (f_k' \circ \cdots \circ f_1')(h_{\tau_1}(x)).$$

Let $y = (f_k \circ \cdots \circ f_1)(x) \in \mathcal{D}_{\tau_{k+1}}$. Then:
$$h_{\tau_{k+1}}(y) = (f_k' \circ \cdots \circ f_1')(h_{\tau_1}(x)).$$

By the defining property of $f_{k+1}': \IM(h_{\tau_{k+1}}) \to \IM(h_{\tau_{k+2}})$:
$$f_{k+1}'(h_{\tau_{k+1}}(z)) = h_{\tau_{k+2}}(f_{k+1}(z))$$
for all $z \in \mathcal{D}_{\tau_{k+1}}$. Taking $z = y$:
$$f_{k+1}'(h_{\tau_{k+1}}(y)) = h_{\tau_{k+2}}(f_{k+1}(y)).$$

Substituting $y = (f_k \circ \cdots \circ f_1)(x)$:
$$f_{k+1}'(h_{\tau_{k+1}}((f_k \circ \cdots \circ f_1)(x))) = h_{\tau_{k+2}}(f_{k+1}((f_k \circ \cdots \circ f_1)(x))).$$

The right-hand side is $h_{\tau_{k+2}}((f_{k+1} \circ f_k \circ \cdots \circ f_1)(x))$.

For the left-hand side, using the inductive hypothesis:
\begin{align*}
f_{k+1}'(h_{\tau_{k+1}}((f_k \circ \cdots \circ f_1)(x))) &= f_{k+1}'((f_k' \circ \cdots \circ f_1')(h_{\tau_1}(x))) \\
&= (f_{k+1}' \circ f_k' \circ \cdots \circ f_1')(h_{\tau_1}(x)).
\end{align*}

Equating:
$$h_{\tau_{k+2}}((f_{k+1} \circ f_k \circ \cdots \circ f_1)(x)) = (f_{k+1}' \circ f_k' \circ \cdots \circ f_1')(h_{\tau_1}(x)),$$
establishes the $(k+1)$-fold composition preservation.

By induction, the statement holds for all $n \geq 1$.
\end{proof}

Theorem~\ref{thm:compn} establishes that the family of lifted maps $\{f_i'\}$ forms a representation of the compositional structure of $\mathcal{M}_{int}$ within $\mathcal{M}_{\mathcal{S}}$. This is the intensional analogue in \cite{quigley2025}. Composition in the source model (the intensional semantics) corresponds to composition in the target model (the vector space semantics), mediated by the injective embeddings $\{h_{\tau_i}\}$. Combined with Lemma~\ref{lem:multilin}, the recursive evaluation of complex expressions in $\mathcal{M}_{int}$ has a faithful counterpart in $\mathcal{M}_{\mathcal{S}}$. The lifting $f \mapsto f'$ thus preserves both the input-output behavior of semantic functions and their compositional interactions.

\subsection{Proof of homomorphism theorem}\label{subsec:proof}

We now assemble the components to complete the proof of Theorem~\ref{thm:intenhomo}.

\begin{proof}
The proof proceeds by establishing each component of the theorem.

For each type $\tau \in \mathcal{T}$, we construct $\mathcal{S}_{\mathcal{D}_\tau}$ by recursion on the structure of types:
\begin{itemize}
    \item for primitive extensional types $e$ and $t$: vector spaces with orthonormal bases indexed by domain elements, as in the extensional case;
    \item for each index sort $\sigma \in \Sigma$: a vector space $\mathcal{S}_{\mathcal{D}_\sigma}$ with orthonormal basis $\{\mathbf{b}_i \mid i \in \mathcal{D}_\sigma\}$;
    \item for the compound index space $S = \prod_{\sigma \in \Sigma} \mathcal{D}_\sigma$: a vector space $\mathcal{S}_S$ with orthonormal basis $\{\mathbf{b}_s \mid s \in S\}$;
    \item for function types $\langle \tau_1, \tau_2 \rangle$: $\mathcal{S}_{\mathcal{D}_{\langle \tau_1, \tau_2 \rangle}} = \operatorname{Hom}(\mathcal{S}_{\mathcal{D}_{\tau_1}}, \mathcal{S}_{\mathcal{D}_{\tau_2}})$.
\end{itemize}

For each type $\tau \in \mathcal{T}$, we define $h_\tau: \mathcal{D}_\tau \to \mathcal{S}_{\mathcal{D}_\tau}$:
\begin{itemize}
    \item for primitive types: $h_\tau(a) = \mathbf{b}_a$;
    \item for function types $\tau = \langle \tau_1, \tau_2 \rangle$: $h_\tau(f)$ is the linear map defined on basis vectors by $h_\tau(f)(\mathbf{b}_a) = h_{\tau_2}(f(a))$ and extended linearly.
\end{itemize}
Injectivity was established in Lemma~\ref{lem:injfunc}.

For each intensional semantic function $f: \mathcal{D}_{\tau_1} \times \cdots \times \mathcal{D}_{\tau_n} \to \mathcal{D}_{\tau_{n+1}}$, we construct $f': \IM(h_{\tau_1}) \times \cdots \times \IM(h_{\tau_n}) \to \IM(h_{\tau_{n+1}})$ as in Lemma~\ref{lem:lift}. The commutation condition
$$h_{\tau_{n+1}} \circ f = f' \circ (h_{\tau_1} \times \cdots \times h_{\tau_n})$$
holds by construction.

The lifted map $f'$ extends to a multilinear map on the spans of the images, as established in Lemma~\ref{lem:multilin}.

For any sequence of composable semantic functions $f_1, f_2, \ldots, f_n$, the composition $(f_n \circ \cdots \circ f_1)$ in $\mathcal{M}_{int}$ lifts to $(f_n' \circ \cdots \circ f_1')$ in $\mathcal{M}_{\mathcal{S}}$, as established in Theorem~\ref{thm:compn}.

Thus the entire intensional structure of domains, functions, composition, is faithfully represented in the family of vector spaces with (multi)linear maps, and all diagrams commute.
\end{proof}

\section{Modal operators}\label{sec:modalops}

We now address the representation of  modal operators $\Diamond$ (possibility) and $\Box$ (necessity): propositions become vectors (or functions into $\mathcal{S}_{\mathcal{D}_t}$); accessibility relations become linear operators; modal quantifiers (``there exists'', ``for all'') are realized via threshold or measure-theoretic conditions.

Recall that for each index sort $\sigma \in \Sigma$, the Kripke frame $\mathcal{F}_\sigma = (\mathcal{D}_\sigma, \mathcal{R}_\sigma)$ consists of a set $\mathcal{D}_\sigma$ of indices and an accessibility relation $\mathcal{R}_\sigma \subseteq \mathcal{D}_\sigma \times \mathcal{D}_\sigma$, where $\mathcal{R}_\sigma(w, w')$ indicates that $w'$ is accessible from $w$. A proposition $\varphi$ is interpreted as a function $\varphi: \mathcal{D}_\sigma \to \{0, 1\}$. The modal operators are defined in the usual way:
\begin{align*}
(\Diamond \varphi)(w) &= \begin{cases} 1 & \text{if there exists } w' \text{ with } \mathcal{R}_\sigma(w, w') \text{ and } \varphi(w') = 1, \\ 0 & \text{otherwise}; \end{cases} \\[6pt]
(\Box \varphi)(w) &= \begin{cases} 1 & \text{if for all } w' \text{ with } \mathcal{R}_\sigma(w, w'), \varphi(w') = 1, \\ 0 & \text{otherwise}. \end{cases}
\end{align*}

The representation of these operators depends on whether $\mathcal{D}_\sigma$ is finite, countably infinite, or uncountably infinite. The accessibility relation is encoded as a linear operator $\mathbf{A}$, a matrix for finite domains, an infinite matrix for countable domains, or an integral kernel for uncountable domains. Applying $\mathbf{A}$ to the vector encoding of $\varphi$ yields an accumulation: for each index $w$, the result counts (or measures) how much truth is present among the indices accessible from $w$. The modal conditions then become:
\begin{itemize}
    \item for necessity ($\Box$), the accumulated truth equals the total measure of the accessible set, wherein truth holds at all accessible indices;
    \item for possibility ($\Diamond$), the accumulated truth is positive, in that truth holds at some accessible index (or, in the measure-theoretic setting, on a set of positive measure).
\end{itemize}

Note that the measure-theoretic yields a logic that is non-classical in an important sense: necessity can hold despite uncountably many counterexamples (provided they form a null set), and possibility requires positive measure. This follows from the generalization of quantification to continua, where isolated exceptions are infinitesimally rare and should not, in many applications, defeat universal or existential claims.

We develop these constructions in turn: first necessity, then possibility, and finally a discussion of how the framework extends to temporal, spatial, and other index sorts.

\subsection{Necessity}\label{subsec:necessity}

When $|\mathcal{D}_\sigma| = n$ is finite, we label the indices as $w_1, \ldots, w_n$. Each proposition $\varphi$ is encoded as a vector:
$$\mathbf{v}(\varphi) = (\varphi(w_1), \ldots, \varphi(w_n))^\top \in \{0, 1\}^n \subseteq \mathbb{R}^n.$$

The accessibility relation is represented by an $n \times n$ adjacency matrix $\mathbf{A}$ of the form:
$$\mathbf{A}_{ij} = \begin{cases} 1 & \text{if } \mathcal{R}_\sigma(w_i, w_j), \\ 0 & \text{otherwise}. \end{cases}$$

The matrix-vector product $\mathbf{A}\mathbf{v}(\varphi)$ gives a vector whose $i$-th component counts how many accessible worlds from $w_i$ satisfy $\varphi$:
$$(\mathbf{A}\mathbf{v}(\varphi))_i = \sum_{j=1}^{n} \mathbf{A}_{ij} \cdot \varphi(w_j).$$

Let $\deg_{\mathbf{A}}(w_i) = \sum_{j=1}^{n} \mathbf{A}_{ij}$ denote the out-degree of $w_i$, the number of worlds accessible from $w_i$. Then we have:
$$(\Box \varphi)(w_i) = 1 \text{ iff } (\mathbf{A}\mathbf{v}(\varphi))_i = \deg_{\mathbf{A}}(w_i)$$

\begin{definition}[Necessity on finite domains]\label{def:necfin}
Let $\mathcal{D}_\sigma = \{w_1, \ldots, w_n\}$ be a finite set of indices and $\mathcal{R}_\sigma \subseteq \mathcal{D}_\sigma \times \mathcal{D}_\sigma$ an accessibility relation. Let $\mathbf{A} \in \{0, 1\}^{n \times n}$ be the adjacency matrix with $\mathbf{A}_{ij} = 1 \Leftrightarrow \mathcal{R}_\sigma(w_i, w_j)$. Let $\mathbf{v}(\varphi) \in \{0, 1\}^n$ encode where proposition $\varphi$ is true.

\textbf{Necessity on finite domains} $\Box \varphi$ holds at $w_i$ iff $\varphi$ is true at all worlds accessible from $w_i$:
$$(\Box \varphi)(w_i) = 1 \quad \Longleftrightarrow \quad \sum_{j=1}^{n} \mathbf{A}_{ij} \cdot \varphi(w_j) = \deg_{\mathbf{A}}(w_i) := \sum_{j=1}^{n} \mathbf{A}_{ij}.$$
\end{definition}

\begin{example}[Necessity on a four-world frame]\label{ex:necfin}
Consider the frame $\mathcal{F}_\sigma = (\{w_1, w_2, w_3, w_4\}, \mathcal{R}_\sigma)$ with accessibility relation:
$$\mathcal{R}_\sigma = \{(w_1, w_2), (w_1, w_4), (w_2, w_3), (w_3, w_2), (w_4, w_4)\}.$$

That is: from $w_1$ we can access $w_2$ and $w_4$; from $w_2$ we can access $w_3$; from $w_3$ we can access $w_2$; from $w_4$ we can access only itself. The adjacency matrix is:
$$\mathbf{A} = \begin{pmatrix} 0 & 1 & 0 & 1 \\ 0 & 0 & 1 & 0 \\ 0 & 1 & 0 & 0 \\ 0 & 0 & 0 & 1 \end{pmatrix},$$
where row $i$, column $j$ indicates whether $w_i$ can access $w_j$.

Suppose $\varphi(w_1) = 1$, $\varphi(w_2) = 1$, $\varphi(w_3) = 0$, $\varphi(w_4) = 1$, so:
$$\mathbf{v}(\varphi) = \begin{pmatrix} 1 \\ 1 \\ 0 \\ 1 \end{pmatrix}.$$

Computing $\mathbf{A}\mathbf{v}(\varphi)$:
$$\mathbf{A}\mathbf{v}(\varphi) = \begin{pmatrix} 0 & 1 & 0 & 1 \\ 0 & 0 & 1 & 0 \\ 0 & 1 & 0 & 0 \\ 0 & 0 & 0 & 1 \end{pmatrix} \begin{pmatrix} 1 \\ 1 \\ 0 \\ 1 \end{pmatrix} = \begin{pmatrix} 0 \cdot 1 + 1 \cdot 1 + 0 \cdot 0 + 1 \cdot 1 \\ 0 \cdot 1 + 0 \cdot 1 + 1 \cdot 0 + 0 \cdot 1 \\ 0 \cdot 1 + 1 \cdot 1 + 0 \cdot 0 + 0 \cdot 1 \\ 0 \cdot 1 + 0 \cdot 1 + 0 \cdot 0 + 1 \cdot 1 \end{pmatrix} = \begin{pmatrix} 2 \\ 0 \\ 1 \\ 1 \end{pmatrix}.$$

Comparing with out-degrees:
\begin{itemize}
    \item $\deg_{\mathbf{A}}(w_1) = 2$; $(\mathbf{A}\mathbf{v}(\varphi))_1 = 2$; hence $(\Box \varphi)(w_1) = 1$.
    \item $\deg_{\mathbf{A}}(w_2) = 1$; $(\mathbf{A}\mathbf{v}(\varphi))_2 = 0$; hence $(\Box \varphi)(w_2) = 0$.
    \item $\deg_{\mathbf{A}}(w_3) = 1$; $(\mathbf{A}\mathbf{v}(\varphi))_3 = 1$; hence $(\Box \varphi)(w_3) = 1$.
    \item $\deg_{\mathbf{A}}(w_4) = 1$; $(\mathbf{A}\mathbf{v}(\varphi))_4 = 1$; hence $(\Box \varphi)(w_4) = 1$.
\end{itemize}

Thus:
$$\mathbf{v}(\Box \varphi) = \begin{pmatrix} 1 \\ 0 \\ 1 \\ 1 \end{pmatrix}.$$

The proposition $\varphi$ is necessary at $w_1$, $w_3$, and $w_4$, but not at $w_2$. Indeed, from $w_2$ we can access only $w_3$, where $\varphi(w_3) = 0$, so necessity fails.
\end{example}

When $\mathcal{D}_\sigma = \{w_1, w_2, w_3, \ldots\}$ is countably infinite, the adjacency matrix $\mathbf{A}$ becomes an infinite matrix with $\mathbf{A}_{ij} \in \{0, 1\}$. A proposition $\varphi$ is an infinite sequence $(\varphi(w_1), \varphi(w_2), \ldots) \in \{0, 1\}^{\mathbb{N}}$.

The operator $\mathbf{A}$ acts on $\varphi$ by:
$$(\mathbf{A}\varphi)(w_i) = \sum_{j=1}^{\infty} \mathbf{A}_{ij} \cdot \varphi(w_j),$$
provided this sum converges (which it does when each row has finite support, i.e., each world accesses only finitely many others).

\begin{definition}[Necessity on countably infinite domains]\label{def:neccount}
Let $\mathcal{D}_\sigma = \{w_1, w_2, \ldots\}$ be a countably infinite set of indices and $\mathcal{R}_\sigma \subseteq \mathcal{D}_\sigma \times \mathcal{D}_\sigma$ an accessibility relation with finite out-degrees. Let $\mathbf{A} \in \{0, 1\}^{\mathbb{N} \times \mathbb{N}}$ be the infinite adjacency matrix with $\mathbf{A}_{ij} = 1 \Leftrightarrow \mathcal{R}_\sigma(w_i, w_j)$. Let $\varphi \in \{0, 1\}^{\mathbb{N}}$ encode where proposition $\varphi$ is true.

\textbf{Necessity on countably infinite domains} $\Box \varphi$ holds at $w_i$ iff $\varphi$ is true at all worlds accessible from $w_i$:
$$(\Box \varphi)(w_i) = 1 \quad \Longleftrightarrow \quad \sum_{j=1}^{\infty} \mathbf{A}_{ij} \cdot \varphi(w_j) = \deg_{\mathbf{A}}(w_i) := \sum_{j=1}^{\infty} \mathbf{A}_{ij}.$$
\end{definition}

\begin{example}[Necessity on an infinite chain]\label{ex:neccount}
Consider a countably infinite set of worlds $\{w_0, w_1, w_2, \ldots\}$ with accessibility forming a one-way infinite chain:
$$\mathcal{R}_\sigma(w_i, w_j) \quad \Longleftrightarrow \quad j = i + 1.$$

Each world accesses only its immediate successor, so $\deg_{\mathbf{A}}(w_i) = 1$ for all $i$. The infinite adjacency matrix has $\mathbf{A}_{i,i+1} = 1$ and all other entries $0$.

For any proposition $\varphi$:
$$(\mathbf{A}\varphi)(w_i) = \varphi(w_{i+1}),$$
and hence:
$$(\Box \varphi)(w_i) = 1 \quad \Longleftrightarrow \quad \varphi(w_{i+1}) = 1.$$

Suppose $\varphi(w_i) = 1$ only at $w_5$, $w_{11}$, and $w_{19}$, with $\varphi(w_i) = 0$ elsewhere. Then:
\begin{itemize}
    \item $(\Box \varphi)(w_4) = 1$, since $\varphi(w_5) = 1$.
    \item $(\Box \varphi)(w_{10}) = 1$, since $\varphi(w_{11}) = 1$.
    \item $(\Box \varphi)(w_{18}) = 1$, since $\varphi(w_{19}) = 1$.
    \item All other $(\Box \varphi)(w_i) = 0$.
\end{itemize}

In particular, $(\Box \varphi)(w_5) = 0$ even though $\varphi(w_5) = 1$, because $\varphi(w_6) = 0$. Necessity at $w_i$ depends entirely on the truth of $\varphi$ at the accessible worlds, not at $w_i$ itself.
\end{example}

\begin{remark}
In the countable chain setting, necessity is stringent: truth must propagate forward. We require certainty that some successor satisfies $\varphi$, and that our one and only accessible successor does so. Necessity thus functions as a ``local'' universal quantifier over the (singleton) accessible set.
\end{remark}

When $\mathcal{D}_\sigma$ is uncountable (e.g., $\mathbb{R}$ for continuous time, as it were), naive summation no longer applies; instead, the accessibility matrix becomes an integral kernel, and counting becomes integration against a measure.

Let $(\mathcal{D}_\sigma, \mathcal{M}, \mu)$ be a measure space, where $\mathcal{M}$ is a $\sigma$-algebra and $\mu$ is a measure (we take to be Lebesgue measure for $\mathcal{D}_\sigma = \mathbb{R}$). For each $w \in \mathcal{D}_\sigma$, let $\mathcal{R}_\sigma(w) = \{w' \in \mathcal{D}_\sigma : \mathcal{R}_\sigma(w, w')\}$ denote the set of accessible worlds from $w$, assumed measurable. A proposition $\varphi: \mathcal{D}_\sigma \to \{0, 1\}$ is a measurable characteristic function. The operator $\mathbf{A}$ acts by:
$$(\mathbf{A}\varphi)(w) = \int_{\mathcal{R}_\sigma(w)} \varphi(w') \, d\mu(w'),$$
which measures the ``size'' of the set of accessible worlds where $\varphi$ holds.

\begin{definition}[Necessity on uncountable domains]\label{def:necunc}
Let $(\mathcal{D}_\sigma, \mathcal{M}, \mu)$ be a measure space with $\mathcal{D}_\sigma$ uncountable. Let $\mathcal{R}_\sigma(w) \subseteq \mathcal{D}_\sigma$ be the measurable set of accessible worlds from $w$. Let $\varphi: \mathcal{D}_\sigma \to \{0, 1\}$ be a measurable proposition.

\textbf{Necessity on uncountable domains} $\Box \varphi$ holds at $w$ iff $\varphi$ is true $\mu$-almost everywhere on the accessible set $\mathcal{R}_\sigma(w)$:
$$(\Box \varphi)(w) = 1 \quad \Longleftrightarrow \quad \mu\left(\{w' \in \mathcal{R}_\sigma(w) : \varphi(w') = 0\}\right) = 0.$$

Equivalently:
$$(\Box \varphi)(w) = 1 \quad \Longleftrightarrow \quad \int_{\mathcal{R}_\sigma(w)} \varphi(w') \, d\mu(w') = \mu(\mathcal{R}_\sigma(w)).$$
\end{definition}

\begin{example}[Necessity over continuous time]\label{ex:necunc}
Continuous worlds is non-intuitive; for the sake of illustration, let us consider time. Further, to help illustrate this, let us indulge in imagining a very simple machine as proxy for truth in a model. Let $\mathcal{D}_\sigma = \mathbb{R}$ represent continuous time, with Lebesgue measure $\mu$. Define the accessibility relation by:
$$\mathcal{R}_\sigma(t, t') \quad \Longleftrightarrow \quad |t' - t| \leq 1,$$
so each time $t$ accesses the interval $\mathcal{R}_\sigma(t) = [t-1, t+1]$ of radius $1$. Note that $\mu(\mathcal{R}_\sigma(t)) = 2$ for all $t$.

Let $\varphi: \mathbb{R} \to \{0, 1\}$ indicate whether a machine is running at time $t$. The necessity condition becomes:
$$(\Box \varphi)(t) = 1 \quad \Longleftrightarrow \quad \int_{t-1}^{t+1} \varphi(t') \, dt' = 2.$$

That is, necessity holds at $t$ iff the machine is running at almost every moment in the interval $[t-1, t+1]$.

Consider the case where we have Cantor set exceptions. Suppose $\varphi(t) = 1$ for all $t \in [0, 10]$ except on the Cantor set $C \subseteq [3, 5]$, where $\varphi(t) = 0$. The Cantor set is uncountable but has measure zero: $\mu(C) = 0$.

At $t = 4$, the accessible set is $\mathcal{R}_\sigma(4) = [3, 5]$. On this interval, $\varphi = 1$ everywhere except on $C$. Since $\mu(C) = 0$:
$$\int_3^5 \varphi(t') \, dt' = \mu([3, 5] \setminus C) = 2.$$

Thus $(\Box \varphi)(4) = 1$. Necessity holds despite uncountably many failures, because those failures have measure zero.

Consider now the case of positive-measure exceptions. Suppose instead that $\varphi(t) = 0$ on a measurable set $E \subseteq [3, 5]$ with $\mu(E) = 0.5$. Then:
$$\int_3^5 \varphi(t') \, dt' = 2 - 0.5 = 1.5 < 2.$$

Thus $(\Box \varphi)(4) = 0$. Necessity fails even though $\varphi$ holds at most times in the interval, because the exceptions have positive measure.
\end{example}

The measure-theoretic formulation of necessity generalizes universal quantification to uncountable domains: classical ``for all'' becomes ``for $\mu$-almost all''. This has substantive logical consequences:
\begin{itemize}
    \item necessity can hold despite uncountably many counterexamples, provided they form a null set;
    \item the distinction between ``structural'' failures (positive measure) and ``negligible'' failures (null sets) becomes semantically significant.
\end{itemize}

Indeed, in this way, applications where infinitesimal exceptions (noise, measurement error, isolated discontinuities) should not defeat necessity claims. The resulting logic is non-classical in the sense that $\Box \varphi$ may hold even when $\varphi$ fails at uncountably many points; here, ``almost everywhere'' is the appropriate generalization of ``everywhere'' when quantifying over continua.

\subsection{Possibility}\label{subsec:possibility}

We now turn to possibility $\Diamond$, which dualizes necessity: where $\Box \varphi$ requires $\varphi$ to hold at all accessible worlds, $\Diamond \varphi$ requires $\varphi$ to hold at some accessible world. The vector representation involves the same linear accumulation via the adjacency operator, but with a threshold condition ($\geq 1$) rather than an equality condition:

$$(\Diamond \varphi)(w_i) = 1 \text{ iff } (\mathbf{A}\mathbf{v}(\varphi))_i \geq 1$$

As in the necessity case, consider the frame $\mathcal{F}_\sigma = (\{w_1, w_2, w_3, w_4\}, \mathcal{R}_\sigma)$ with accessibility relation:
$$\mathcal{R}_\sigma = \{(w_1, w_2), (w_1, w_4), (w_2, w_3), (w_3, w_2), (w_4, w_4)\},$$
represented by the adjacency matrix:
$$\mathbf{A} = \begin{pmatrix} 0 & 1 & 0 & 1 \\ 0 & 0 & 1 & 0 \\ 0 & 1 & 0 & 0 \\ 0 & 0 & 0 & 1 \end{pmatrix}.$$

For possibility, we ask whether $\varphi$ holds in at least one accessible world. The sum $\sum_j \mathbf{A}_{ij} \cdot \varphi(w_j)$ is $\geq 1$ if and only if there exists some $w_j$ with $\mathbf{A}_{ij} = 1$ and $\varphi(w_j) = 1$. We define a threshold function $\mathrm{thr}_{\geq 1}: \mathbb{N} \to \{0, 1\}$ by:
$$\mathrm{thr}_{\geq 1}(k) = \begin{cases} 1 & \text{if } k \geq 1, \\ 0 & \text{if } k = 0, \end{cases}$$
and apply it componentwise to $\mathbf{A}\mathbf{v}(\varphi)$:
$$\mathbf{v}(\Diamond \varphi) = \mathrm{thr}_{\geq 1}(\mathbf{A}\mathbf{v}(\varphi)).$$

Thus $\Diamond \varphi$ is computed by linear accumulation $\mathbf{A}\mathbf{v}(\varphi)$ followed by a nonlinear threshold check, encoding ``there is at least one accessible world where $\varphi$ holds''.

\begin{definition}[Possibility on finite domains]\label{def:possfin}
Let $\mathcal{D}_\sigma = \{w_1, \ldots, w_n\}$ be a finite set of indices and $\mathcal{R}_\sigma \subseteq \mathcal{D}_\sigma \times \mathcal{D}_\sigma$ an accessibility relation. Let $\mathbf{A} \in \{0, 1\}^{n \times n}$ be the adjacency matrix with $\mathbf{A}_{ij} = 1 \Leftrightarrow \mathcal{R}_\sigma(w_i, w_j)$. Let $\mathbf{v}(\varphi) \in \{0, 1\}^n$ encode where proposition $\varphi$ is true.

\textbf{Possibility on finite domains} $\Diamond \varphi$ holds at $w_i$ iff there exists some accessible world where $\varphi$ is true:
$$(\Diamond \varphi)(w_i) = 1 \quad \Longleftrightarrow \quad \sum_{j=1}^{n} \mathbf{A}_{ij} \cdot \varphi(w_j) \geq 1.$$
\end{definition}

\begin{example}[Possibility on a four-world frame]\label{ex:possfin}
Using the same frame and adjacency matrix $\mathbf{A}$ as in Example~\ref{ex:necfin}, suppose $\mathbf{v}(\varphi) = (1, 1, 0, 1)^\top$. We computed:
$$\mathbf{A}\mathbf{v}(\varphi) = \begin{pmatrix} 2 \\ 0 \\ 1 \\ 1 \end{pmatrix}.$$

Applying the threshold function componentwise:
$$\mathrm{thr}_{\geq 1}\left(\begin{pmatrix} 2 \\ 0 \\ 1 \\ 1 \end{pmatrix}\right) = \begin{pmatrix} \mathrm{thr}_{\geq 1}(2) \\ \mathrm{thr}_{\geq 1}(0) \\ \mathrm{thr}_{\geq 1}(1) \\ \mathrm{thr}_{\geq 1}(1) \end{pmatrix} = \begin{pmatrix} 1 \\ 0 \\ 1 \\ 1 \end{pmatrix}.$$

Hence $\mathbf{v}(\Diamond \varphi) = (1, 0, 1, 1)^\top$.

To verify explicitly, consider $w_1$:
$$(\Diamond \varphi)(w_1) = 1 \quad \Longleftrightarrow \quad \sum_{j=1}^{4} \mathbf{A}_{1j} \cdot \varphi(w_j) \geq 1.$$

Checking each term:
\begin{align*}
\mathbf{A}_{11} \cdot \varphi(w_1) &= 0 \cdot 1 = 0, \\
\mathbf{A}_{12} \cdot \varphi(w_2) &= 1 \cdot 1 = 1 \geq 1, \\
\mathbf{A}_{13} \cdot \varphi(w_3) &= 0 \cdot 0 = 0, \\
\mathbf{A}_{14} \cdot \varphi(w_4) &= 1 \cdot 1 = 1 \geq 1.
\end{align*}

Since the sum is $2 \geq 1$, we have $(\Diamond \varphi)(w_1) = 1$. Indeed, from $w_1$ we can access $w_2$ and $w_4$, both of which satisfy $\varphi$.

Contrast with $w_2$:
\begin{align*}
\mathbf{A}_{21} \cdot \varphi(w_1) &= 0 \cdot 1 = 0, \\
\mathbf{A}_{22} \cdot \varphi(w_2) &= 0 \cdot 1 = 0, \\
\mathbf{A}_{23} \cdot \varphi(w_3) &= 1 \cdot 0 = 0, \\
\mathbf{A}_{24} \cdot \varphi(w_4) &= 0 \cdot 1 = 0.
\end{align*}

The sum is $0 < 1$, so $(\Diamond \varphi)(w_2) = 0$. From $w_2$, the only accessible world is $w_3$, where $\varphi(w_3) = 0$; thus possibility fails.
\end{example}

\begin{definition}[Possibility on countably infinite domains]\label{def:posscount}
Let $\mathcal{D}_\sigma = \{w_1, w_2, \ldots\}$ be a countably infinite set of indices and $\mathcal{R}_\sigma \subseteq \mathcal{D}_\sigma \times \mathcal{D}_\sigma$ an accessibility relation with finite out-degrees. Let $\mathbf{A} \in \{0, 1\}^{\mathbb{N} \times \mathbb{N}}$ be the infinite adjacency matrix with $\mathbf{A}_{ij} = 1 \Leftrightarrow \mathcal{R}_\sigma(w_i, w_j)$. Let $\varphi \in \{0, 1\}^{\mathbb{N}}$ encode where proposition $\varphi$ is true.

\textbf{Possibility on countably infinite domains} $\Diamond \varphi$ holds at $w_i$ iff at least one accessible world from $w_i$ satisfies $\varphi$:
$$(\Diamond \varphi)(w_i) = 1 \quad \Longleftrightarrow \quad \sum_{j=1}^{\infty} \mathbf{A}_{ij} \cdot \varphi(w_j) \geq 1.$$
\end{definition}

\begin{example}[Possibility on an infinite chain]\label{ex:posscount}
Consider the same countably infinite chain as in Example~\ref{ex:neccount}: worlds $\{w_0, w_1, w_2, \ldots\}$ with accessibility $\mathcal{R}_\sigma(w_i, w_j) \Leftrightarrow j = i + 1$. Each world accesses only its immediate successor.

For any proposition $\varphi$:
$$(\Diamond \varphi)(w_i) = 1 \quad \Longleftrightarrow \quad \varphi(w_{i+1}) = 1,$$
since the sum over accessible worlds reduces to a single term.

Suppose $\varphi(w_i) = 1$ only at $w_5$, $w_{11}$, and $w_{19}$. Then:
\begin{itemize}
    \item $(\Diamond \varphi)(w_4) = 1$, since $\varphi(w_5) = 1$.
    \item $(\Diamond \varphi)(w_{10}) = 1$, since $\varphi(w_{11}) = 1$.
    \item $(\Diamond \varphi)(w_{18}) = 1$, since $\varphi(w_{19}) = 1$.
    \item All other $(\Diamond \varphi)(w_i) = 0$.
\end{itemize}

In this chain structure, where each world has exactly one accessible successor, possibility and necessity coincide: $\Diamond \varphi = \Box \varphi$; ``there exists'' and ``for all'' are equivalent when quantifying over a singleton set.
\end{example}

For uncountable domains, the measure-theoretic formulation of possibility requires that $\varphi$ hold on a set of positive measure within the accessible region.

\begin{definition}[Possibility on uncountable domains]\label{def:possunc}
Let $(\mathcal{D}_\sigma, \mathcal{M}, \mu)$ be a measure space with $\mathcal{D}_\sigma$ uncountable. Let $\mathcal{R}_\sigma(w) \subseteq \mathcal{D}_\sigma$ be the measurable set of accessible worlds from $w$. Let $\varphi: \mathcal{D}_\sigma \to \{0, 1\}$ be a measurable proposition.

\textbf{Possibility on uncountable domains} $\Diamond \varphi$ holds at $w$ iff there exists a subset of $\mathcal{R}_\sigma(w)$ with positive measure on which $\varphi$ holds:
$$(\Diamond \varphi)(w) = 1 \quad \Longleftrightarrow \quad \mu\left(\{w' \in \mathcal{R}_\sigma(w) : \varphi(w') = 1\}\right) > 0.$$
\end{definition}

\begin{example}[Possibility over continuous time]\label{ex:possunc}
Again, continuous worlds is non-intuitive; for the sake of illustration, let us consider time as our domain. Let $\mathcal{D}_\sigma = \mathbb{R}$ with Lebesgue measure $\mu$, and accessibility $\mathcal{R}_\sigma(t, t') \Leftrightarrow |t' - t| \leq 1$, so $\mathcal{R}_\sigma(t) = [t-1, t+1]$. The possibility condition at time $t$ is:
$$(\Diamond \varphi)(t) = 1 \quad \Longleftrightarrow \quad \mu\left(\{t' \in [t-1, t+1] : \varphi(t') = 1\}\right) > 0.$$

Consider: positive-measure truth set. Define:
$$\varphi(t) = \begin{cases} 1 & \text{if } t \in [4.25, 4.35], \\ 0 & \text{otherwise}. \end{cases}$$

Our machine runs only on a small interval of length $0.1$ within $[3, 5]$. At $t = 4$, the accessible region is $[3, 5]$, and:
$$\mu\left(\{t' \in [3, 5] : \varphi(t') = 1\}\right) = \mu([4.25, 4.35]) = 0.1 > 0.$$

Thus $(\Diamond \varphi)(4) = 1$. Even though the machine is off almost everywhere, it is still possible that it is running, in that it runs on a non-negligible subset of the accessible interval.

Consider, now, measure-zero truth set. Suppose instead that $\varphi(t) = 1$ only on a measure-zero set, such as a single point $\{4.3\}$ or the Cantor set $C \subseteq [3, 5]$. Then:
$$\mu\left(\{t' \in [3, 5] : \varphi(t') = 1\}\right) = 0,$$
and thus $(\Diamond \varphi)(4) = 0$. No measurably significant evidence exists to support the claim that the machine might be running.
\end{example}

In the measure-theoretic formulation, possibility is defined by positive measure. In this way, note:
\begin{itemize}
    \item truth at isolated points or measure-zero sets does not suffice for possibility;
    \item possibility requires a non-negligible region of truth within the accessible set.
\end{itemize}

A consequence of this is some notion of plausibility: rare events that occupy positive measure count as possible, while infinitesimally rare events (measure zero) do not. Furthermore, the measure-theoretic semantics distinguishes between ``mathematically possible'' (exists at least one point) and ``measurably possible'' (exists a set of positive measure).

In classical modal logic, necessity and possibility are duals: $\Box \varphi \equiv \neg \Diamond \neg \varphi$; this duality is preserved:
\begin{itemize}
    \item for finite and countable domains, $(\Box \varphi)(w_i) = 1$ iff the count of accessible $\varphi$-worlds equals the total accessible count; $(\Diamond \varphi)(w_i) = 1$ iff this count is at least one;
    \item for uncountable domains, $(\Box \varphi)(w) = 1$ iff the measure of accessible $\neg\varphi$-worlds is zero; $(\Diamond \varphi)(w) = 1$ iff the measure of accessible $\varphi$-worlds is positive.
\end{itemize}

Duality $\Box \varphi = \neg \Diamond \neg \varphi$ translates to:
$$\mu(\{w' \in \mathcal{R}(w) : \varphi(w') = 0\}) = 0 \quad \Longleftrightarrow \quad \mu(\{w' \in \mathcal{R}(w) : (\neg\varphi)(w') = 1\}) = 0,$$
which is equivalent to asserting that $\neg(\mu(\{w' \in \mathcal{R}(w) : (\neg\varphi)(w') = 1\}) > 0)$, i.e., $\neg \Diamond \neg \varphi$.

Thus the measure-theoretic semantics preserves the classical duality, with ``for all'' becoming ``almost everywhere'' and ``there exists'' becoming ``on a set of positive measure''.

\subsection{Other intensional index sorts}\label{subsec:othersorts}

The framework developed above applies uniformly to any index sort $\sigma \in \Sigma$. Beyond possible worlds, two natural index sorts are temporal indices and spatial/location indices. Temporal logic has a history beginning with Prior's foundational work \cite{prior1957time} and developed extensively in subsequent decades \cite{Venema2008,ploug2012branching}. Spatial and location logics have received similar treatment \cite{Kamide2005,Simons2006,vanbenthem2007modal}, and the interaction of spatial and temporal modalities raises additional complexities \cite{kontchakov2007spatial,Chierchia2000}. We briefly survey how the operator construction specializes to these cases.

Let $\iota \in \Sigma$ be the temporal index sort, with domain $\mathcal{D}_\iota$ representing times. The choice of domain structure depends on the application (i.e., $\mathbb{Z}$ or $\mathbb{N}$ for computation steps or continuous in $\mathbb{R}$ \cite{HIRSHFELD20121}, tree structures for branching indeterminate futures \cite{Andreoletti2024}, intervals of the form $\{[a,b] : a \leq b\}$ for event durations \cite{allen1997actions}, etc.).

The accessibility relation $\mathcal{R}_\iota \subseteq \mathcal{D}_\iota \times \mathcal{D}_\iota$ encodes which times are reachable or relevant from a given time. Several natural choices correspond to familiar temporal operators. Strict future accessibility, defined by $\mathcal{R}_\iota(t, t') \Leftrightarrow t' > t$, yields $\Box_\iota \varphi$ at $t$ meaning ``$\varphi$ holds at all future times'', the \textbf{G} (globally) operator of temporal logic. Bounded future accessibility, $\mathcal{R}_\iota(t, t') \Leftrightarrow 0 < t' - t \leq \delta$, restricts this to a finite horizon: $\Box_\iota \varphi$ at $t$ means ``$\varphi$ holds throughout the next $\delta$ time units''. Immediate successor accessibility in discrete time, $\mathcal{R}_\iota(t, t') \Leftrightarrow t' = t + 1$, reduces $\Box_\iota \varphi$ at $t$ to simply $\varphi(t+1)$, the \textbf{X} (next) operator. Temporal proximity, $\mathcal{R}_\iota(t, t') \Leftrightarrow |t' - t| \leq \epsilon$, captures a form of temporal stability: $\Box_\iota \varphi$ at $t$ means ``$\varphi$ holds in an $\epsilon$ neighborhood of $t$''. Past-directed accessibility, $\mathcal{R}_\iota(t, t') \Leftrightarrow t' < t$, yields the past-tense modality ``it has always been the case that $\varphi$''.

For discrete $\mathcal{D}_\iota = \{t_1, t_2, \ldots, t_n\}$, the adjacency matrix $\mathbf{A}^\iota$ encodes temporal accessibility, and the modal operators are computed as in the finite case. For continuous $\mathcal{D}_\iota = \mathbb{R}$, the operator becomes an integral transform:
$$(\mathbf{A}^\iota \varphi)(t) = \int_{\mathcal{R}_\iota(t)} \varphi(t') \, d\mu(t').$$
For bounded future accessibility with $\mathcal{R}_\iota(t) = (t, t + \delta]$, necessity becomes:
$$(\Box_\iota \varphi)(t) = 1 \quad \Longleftrightarrow \quad \int_t^{t+\delta} \varphi(t') \, dt' = \delta.$$

\begin{remark}
Temporal logic (LTL, CTL, etc.) employs operators \textbf{G} (globally/always), \textbf{F} (finally/eventually), \textbf{X} (next), and \textbf{U} (until) \cite{Venema2008}. Here, we represent \textbf{G} and \textbf{F} directly as $\Box_\iota$ and $\Diamond_\iota$ with appropriate accessibility relations. The \textbf{U} operator (``$\varphi$ until $\psi$'') requires a more complex construction, involving two propositions and path structure; this can be accommodated, but extends beyond simple modal operators.
\end{remark}

Let $\ell \in \Sigma$ be the spatial/location index sort, with domain $\mathcal{D}_\ell$ representing locations \cite{Kamide2005,Simons2006}. Again, as with temporal domains, the structure varies by application: discrete finite on $\{L_1, \ldots, L_n\}$; graph-based vertices \cite{cardelli2002spatial,cardelli2000anytime}; metric and topological spaces \cite{Rescher1968}; etc..

The accessibility relation $\mathcal{R}_\ell$ encodes spatial relationships. Metric proximity, $\mathcal{R}_\ell(x, y) \Leftrightarrow d(x, y) \leq r$ for a metric $d$ and radius $r$, yields $\Box_\ell \varphi$ at $x$ meaning ``$\varphi$ holds everywhere within distance $r$ of $x$''. Graph adjacency, $\mathcal{R}_\ell(x, y) \Leftrightarrow (x, y) \in E$ for edge set $E$, gives $\Box_\ell \varphi$ at $x$ meaning ``$\varphi$ holds at all neighbors of $x$''. Reachability, where $\mathcal{R}_\ell(x, y)$ holds iff there exists a path from $x$ to $y$, extends this to transitive closure: $\Box_\ell \varphi$ at $x$ means ``$\varphi$ holds at all locations reachable from $x$''. Containment relations, $\mathcal{R}_\ell(x, y) \Leftrightarrow y \in \mathrm{Region}(x)$ for some region function, model ``everywhere in this region''. Visibility or line-of-sight accessibility, where $\mathcal{R}_\ell(x, y)$ holds iff $y$ is visible from $x$ without obstruction, is especially relevant for robotics, surveillance, and sensor coverage applications \cite{morales2007new,walega2019}.

For discrete locations $\mathcal{D}_\ell = \{L_1, \ldots, L_n\}$, the adjacency matrix $\mathbf{A}^\ell$ is the standard graph adjacency matrix (or its transitive closure for reachability), and modal operators function exactly as in the finite-domain treatment. For continuous $\mathcal{D}_\ell = \mathbb{R}^n$ with metric proximity $d(x, y) \leq r$:
$$(\mathbf{A}^\ell \varphi)(x) = \int_{B_r(x)} \varphi(y) \, d\mu(y),$$
where $B_r(x)$ is the ball of radius $r$ centered at $x$. Necessity becomes:
$$(\Box_\ell \varphi)(x) = 1 \quad \Longleftrightarrow \quad \int_{B_r(x)} \varphi(y) \, dy = \mu(B_r(x)) = \frac{\pi^{n/2}}{\Gamma(n/2 + 1)} r^n$$

the volume of the $n$-dimensional ball, generalizing the familiar cases: $2r$ for $n=1$ (interval); $\pi r^2$ for $n=2$ (disk); $\frac{4}{3}\pi r^3$ for $n=3$ (sphere).

\begin{remark}
Spatial modalities connect to topological semantics for modal logic \cite{vanbenthem2007modal}, where $\Box \varphi$ holds at $x$ iff $\varphi$ holds on some neighborhood of $x$ (the interior operator), and $\Diamond \varphi$ holds at $x$ iff $x$ is in the closure of the $\varphi$-region. We generalize this as ``a set of positive measure''.
\end{remark}

With multiple index sorts $\Sigma = \{w, \iota, \ell\}$, the compound index space is simply $S = \mathcal{D}_w \times \mathcal{D}_\iota \times \mathcal{D}_\ell$ \cite{KURUCZ2007869,gabbay2003many,reynolds2001products}. A compound index $s = (w, t, x) \in S$ specifies a possible world, a time, a location. The intension of a proposition is a function $S \to \{0,1\}$, and its vector representation is a linear map $\mathcal{S}_S \to \mathcal{S}_{\mathcal{D}_t}$.

When the accessibility relations $\mathcal{R}_w$, $\mathcal{R}_\iota$, $\mathcal{R}_\ell$ are independent (that is, when there are no cross-sort constraints), then modal operators act on their respective dimensions: $\Box_w$ quantifies over accessible worlds while holding time and location fixed; $\Box_\iota$ quantifies over accessible times while holding world and location fixed; $\Box_\ell$ quantifies over accessible locations while holding world and time fixed. Compositions such as $\Box_w \Box_\iota \varphi$ (``necessarily, always $\varphi$'') or $\Diamond_\ell \Box_\iota \varphi$ (``somewhere, always $\varphi$'') then express complex modal claims by combining quantification across multiple dimensions. Sequential processing of intensional domains here is reminiscent of Kaplan’s procedural intensionality \cite{Kaplan1989,quigley2024categoricalframeworktypedextensional}.

More interesting are cases where accessibility in one sort depends on position in another \cite{kontchakov2007spatial}. In relativistic spacetime, accessibility is constrained by the light cone: from $(t, x)$, the accessible points $(t', x')$ satisfy $|x' - x| \leq c(t' - t)$ for $t' > t$, where $c$ is the speed of light \cite{Hirsch2018,Lewis2026,andreka2007logic}. Expanding reachability models situations we might imagine in which the set of accessible locations grows with time, as in exploration or diffusion processes. World-dependent geography arises when different possible worlds have different spatial accessibility structures, altering which locations are reachable. These dependencies can be encoded by defining $\mathcal{R}$ on the product space $S$ directly, rather than as independent relations per sort.

The vector space representation handles all index sorts uniformly: accessibility becomes a linear operator (matrix or integral kernel); modal quantifier is realized via a condition on the accumulated values.

\section{Conclusion}\label{sec:conclusion}

We have established that intensional formal semantics embeds into distributional vector space semantics via a structure-preserving homomorphism. The construction extends the extensional results of \cite{quigley2025} to accommodate Kripke frames, modal operators, and multiple index sorts.

The main contributions are as follows. Typed intensional domains $\mathcal{D}_\tau$ embed injectively into vector spaces $\mathcal{S}_{\mathcal{D}_\tau}$, with function types mapping to $\operatorname{Hom}$ spaces. Intensional semantic functions lift to (multi)linear maps such that composition is preserved: the recursive evaluation of complex expressions in the formal model has a faithful counterpart in the vector model. Intensions, functions from indices to extensions, are represented as linear operators, unifying the treatment of propositions, individual concepts, and properties. Modal operators $\Box$ and $\Diamond$ are derived algebraically: accessibility relations become linear operators (adjacency matrices or integral kernels); modal conditions reduce to comparisons on accumulated values. For uncountable index domains, the measure-theoretic generalization yields a non-classical, but natural, logic in which necessity is truth almost everywhere and possibility is truth on a set of positive measure.

Limitations exist. The homomorphism is injective, in that the vector spaces contain elements outside the image of the embedding, and we might wonder about surjectivity; this is necessary for cardinality reasons, but it means that not every vector or linear map corresponds to a semantic object. The measure-theoretic modal logic for continuous domains, while natural for many applications, departs from classical quantification: necessity can hold despite uncountably many counterexamples, and possibility requires more than isolated witnesses; whether this departure is a feature or a limitation depends on the intended application. The framework does not address how the abstract vector spaces $\mathcal{S}_{\mathcal{D}_\tau}$ might be instantiated by empirically learned representations; compatibility is established and proven, but interface with corpus-derived embeddings remains.

Consider some directions for future work. The connection to neural architectures is immediate: if word vectors and compositional operations can be learned such that the homomorphism conditions are approximately satisfied, then the resulting system would inherit both the distributional grounding of learned representations and the logical guarantees of formal semantics. The measure-theoretic modal logic invites further investigation: what proof theory corresponds to this semantics? how do the familiar modal axioms (K, T, S4, S5) fare when quantifiers range over measure spaces? The treatment of propositional attitudes as belief, knowledge, desire, requires additional structure beyond simple modality; extending to accommodate attitude verbs and their interactions is a natural next step. Finally, the computational implementation of these constructions, particularly for continuous index domains requiring numerical integration, poses engineering challenges that would need to be addressed for practical application.

The broader significance: formal and distributional semantics, often treated as competing paradigms, are structurally compatible. The homomorphism guarantees that one need not choose between truth-conditional precision and distributional flexibility; the two cohere. This compatibility supports cognitive architectures that integrate symbolic inference with continuous representation, an integration for which there is experimental evidence in human reasoning and combinatorial processing \cite{Hurst2024,Caplan2025}, and which aligns with proposals for high-dimensional concept spaces in cognition \cite{Zhang2023,Mitchell2008}.

\bibliography{references}

@article{Quigley2025,
  author    = {Quigley, Daniel},
  title     = {A Vector Logic for Extensional Formal Semantics},
  journal   = {Journal of Logic, Language and Information},
  year      = {2025},
  volume    = {34},
  number    = {5},
  pages     = {557--599},
  doi       = {10.1007/s10849-025-09443-x},
  url       = {https://doi.org/10.1007/s10849-025-09443-x},
  issn      = {1572-9583},
}

@incollection{Montague1974,
  author    = {Montague, Richard},
  title     = {English as a Formal Language},
  booktitle = {Formal Philosophy: Selected Papers of Richard Montague},
  editor    = {Thomason, Richmond},
  pages     = {188--221},
  publisher = {Yale University Press},
  address   = {New Haven, CT},
  year      = {1974}
}

@incollection{Tarski1931,
  author    = {Tarski, Alfred},
  title     = {The Concept of Truth in Formalized Languages},
  booktitle = {Logic, Semantics, Metamathematics},
  editor    = {Tarski, Alfred},
  pages     = {152--278},
  publisher = {Oxford University Press},
  year      = {1931}
}

@book{Mendelson1997,
  author    = {Mendelson, Elliott},
  title     = {Introduction to Mathematical Logic},
  edition   = {4},
  series    = {Discrete Mathematics and Its Applications},
  publisher = {Taylor \& Francis},
  year      = {1997}
}

@book{Chang1977,
  author    = {Chang, Chung Chang and Keisler, Howard Jerome},
  title     = {Model Theory},
  edition   = {2},
  publisher = {North-Holland},
  address   = {Amsterdam},
  year      = {1977}
}

@book{harris1979mathematical,
  title={Mathematical Structures of Language},
  author={Harris, Zellig Sabbettai},
  isbn={9780882759586},
  lccn={79004568},
  series={Interscience tracts in pure and applied mathematics},
  year={1979},
  publisher={R. E. Krieger Publishing Company}
}

@article{firth2957,
author = {Firth, John Rupert},
title = {Applications of General Linguistics},
journal = {Transactions of the Philological Society},
volume = {56},
number = {1},
pages = {1--14},
doi = {https://doi.org/10.1111/j.1467-968X.1957.tb00568.x},
year = {1957}
}

@article{Lenci2022,
  author    = {Lenci, Alessandro and Sahlgren, Magnus and Jeuniaux, Patrick and Gyllensten, Amaru Cuba and Miliani, Martina},
  title     = {A Comparative Evaluation and Analysis of Three Generations of Distributional Semantic Models},
  journal   = {Language Resources and Evaluation},
  volume    = {56},
  number    = {4},
  pages     = {1269--1313},
  year      = {2022}
}

@article{Boleda2020,
  author    = {Boleda, Gemma},
  title     = {Distributional Semantics and Linguistic Theory},
  journal   = {Annual Review of Linguistics},
  volume    = {6},
  pages     = {213--234},
  year      = {2020}
}

@book{Gardenfors2014,
  author    = {G{\"a}rdefors, Peter},
  title     = {The Geometry of Meaning: Semantics Based on Conceptual Spaces},
  publisher = {The MIT Press},
  year      = {2014}
}

@book{Gardenfors2000,
  author    = {G{\"a}rdefors, Peter},
  title     = {Conceptual Spaces: The Geometry of Thought},
  publisher = {The MIT Press},
  year      = {2000}
}

@incollection{Marcolli2023,
    author = {Marcolli, Matilde and Chomsky, Noam and Berwick, Robert C.},
    isbn = {9780262383349},
    title = {The Syntax–Semantics Interface: An Algebraic Model},
    booktitle = {Mathematical Structure of Syntactic Merge: An Algebraic Model for Generative Linguistics},
    publisher = {The MIT Press},
    year = {2025},
    month = {08},
    doi = {10.7551/mitpress/15811.003.0008},
}

@book{Heim1998,
  author    = {Heim, Irene and Kratzer, Angelika},
  title     = {Semantics in Generative Grammar},
  publisher = {Blackwell},
  year      = {1998}
}

@unpublished{vonFintel2023,
  author    = {{von Fintel}, Kai and Heim, Irene},
  title     = {Intensional Semantics},
  note      = {MIT lecture notes},
  year      = {2023},
  url       = {https://github.com/fintelkai/fintel-heim-intensional-notes}
}

@article{Bullinaria2012,
  author    = {Bullinaria, John A. and Levy, Joseph P.},
  title     = {Extracting Semantic Representations from Word Co-occurrence Statistics: Stop-lists, Stemming, and {SVD}},
  journal   = {Behavior Research Methods},
  volume    = {44},
  pages     = {890--907},
  year      = {2012}
}

@misc{mikolov2013efficientestimationwordrepresentations,
      title={Efficient Estimation of Word Representations in Vector Space}, 
      author={Tomas Mikolov and Kai Chen and Greg Corrado and Jeffrey Dean},
      year={2013},
      eprint={1301.3781},
      archivePrefix={arXiv},
      primaryClass={cs.CL},
      url={https://arxiv.org/abs/1301.3781}, 
}

@inproceedings{pennington2014glove,
  author = {Jeffrey Pennington and Richard Socher and Christopher D. Manning},
  booktitle = {Empirical Methods in Natural Language Processing (EMNLP)},
  title = {GloVe: Global Vectors for Word Representation},
  year = {2014},
  pages = {1532--1543},
  url = {http://www.aclweb.org/anthology/D14-1162},
}

@inproceedings{Arora2012,
  author    = {Arora, Sanjeev and Ge, Rong and Moitra, Ankur},
  title     = {Learning Topic Models--Going Beyond {SVD}},
  booktitle = {2012 IEEE 53rd Annual Symposium on Foundations of Computer Science},
  pages     = {1--10},
  publisher = {IEEE},
  year      = {2012}
}

@article{Coecke2010,
  author    = {Coecke, Bob and Sadrzadeh, Mehrnoosh and Clark, Stephen},
  title     = {Mathematical Foundations for a Compositional Distributional Model of Meaning},
  journal   = {Linguistic Analysis},
  note      = {Lambek Festschrift, Special Issue},
  year      = {2010},
  eprint    = {1003.4394},
  archiveprefix = {arXiv}
}

@book{Chierchia2000,
  author    = {Chierchia, Gennaro and McConnell-Ginet, Sally},
  title     = {Meaning and Grammar: An Introduction to Semantics},
  publisher = {MIT Press},
  address   = {Cambridge, MA},
  year      = {2000}
}

@misc{Coppock2024,
  author       = {Coppock, Elizabeth and Champollion, Lucas},
  title        = {Invitation to Formal Semantics},
  year         = {2024},
  url          = {https://eecoppock.info/bootcamp/semantics-boot-camp.pdf},
  note         = {[Online; accessed 25-May-2024]}
}

@article{Lambek1958,
author = {Joachim Lambek},
title = {The Mathematics of Sentence Structure},
journal = {The American Mathematical Monthly},
volume = {65},
number = {3},
pages = {154--170},
year = {1958},
publisher = {Taylor \& Francis},
doi = {10.1080/00029890.1958.11989160},
}

@incollection{Lambek1961,
  author    = {Lambek, Joachim},
  title     = {On the Calculus of Syntactic Types},
  booktitle = {Structure of Language and its Mathematical Aspects},
  editor    = {Jakobson, Roman},
  publisher = {American Mathematical Society},
  address   = {Providence, RI},
  year      = {1961},
  pages     = {166--178}
}

@article{Coecke2013,
title = {Lambek vs. Lambek: Functorial vector space semantics and string diagrams for Lambek calculus},
journal = {Annals of Pure and Applied Logic},
volume = {164},
number = {11},
pages = {1079-1100},
year = {2013},
note = {Special issue on Seventh Workshop on Games for Logic and Programming Languages (GaLoP VII)},
issn = {0168-0072},
doi = {https://doi.org/10.1016/j.apal.2013.05.009},
author = {Bob Coecke and Edward Grefenstette and Mehrnoosh Sadrzadeh},
}

@incollection{Burnistov2023,
  author    = {Burnistov, Artem and Stukachev, Alexey},
  title     = {Generalized Computable Models and Montague Semantics},
  booktitle = {Logic and Algorithms in Computational Linguistics 2021 (LACompLing2021)},
  editor    = {Loukanova, Roussanka and Lumsdaine, Peter LeFanu and Muskens, Reinhard},
  series    = {Studies in Computational Intelligence},
  volume    = {1081},
  publisher = {Springer},
  address   = {Cham},
  year      = {2023}
}

@article{Greco2020,
  author       = {Giuseppe Greco and
                  Fei Liang and
                  Michael Moortgat and
                  Alessandra Palmigiano and
                  Apostolos Tzimoulis},
  title        = {Vector Spaces as Kripke Frames},
  journal      = {{FLAP}},
  volume       = {7},
  number       = {5},
  pages        = {853--873},
  year         = {2020},
}

@inproceedings{MitchellLapata2008,
  author    = {Mitchell, Jeff and Lapata, Mirella},
  title     = {Vector-based Models of Semantic Composition},
  booktitle = {Proceedings of ACL-08: HLT},
  pages     = {236--244},
  address   = {Columbus, Ohio},
  year      = {2008},
  publisher = {Association for Computational Linguistics}
}

@article{Baroni2014,
  author    = {Baroni, Marco and Bernardi, Raffaella and Zamparelli, Roberto},
  title     = {Frege in Space: A Program for Composition Distributional Semantics},
  journal   = {Linguistic Issues in Language Technology},
  volume    = {9},
  year      = {2014},
  publisher = {CSLI Publications}
}

@inproceedings{Mao2022,
  author    = {Mao, Shufan and Huebner, Philip A. and Willits, Jon A.},
  title     = {Compositional Generalization in a Graph-based Model of Distributional Semantics},
  booktitle = {Proceedings of the 44th Annual Meeting of the Cognitive Science Society},
  pages     = {1993--1999},
  year      = {2022}
}

@article{Amigo2022,
    author = {Amigó, Enrique and Ariza-Casabona, Alejandro and Fresno, Victor and Martí, M. Antònia},
    title = {Information Theory–based Compositional Distributional
                    Semantics},
    journal = {Computational Linguistics},
    volume = {48},
    number = {4},
    pages = {907--948},
    year = {2022},
    month = {12},
    issn = {0891-2017},
    doi = {10.1162/coli_a_00454},
}

@article{Mizraji1992,
  author  = {Mizraji, Eduardo},
  title   = {Vector Logics: The Matrix-Vector Representation of Logical Calculus},
  journal = {Fuzzy Sets and Systems},
  volume  = {50},
  number  = {2},
  pages   = {179--185},
  year    = {1992}
}

@article{Mizraji1996,
  author  = {Mizraji, Eduardo},
  title   = {The Operators of Vector Logic},
  journal = {Mathematical Logic Quarterly},
  volume  = {42},
  pages   = {27--40},
  year    = {1996}
}

@article{Mizraji2008,
  author  = {Mizraji, Eduardo},
  title   = {Vector Logic: A Natural Algebraic Representation of the Fundamental Logical Gates},
  journal = {Journal of Logic and Computation},
  volume  = {18},
  pages   = {97--121},
  year    = {2008}
}

@article{Westphal2005,
    author = {Westphal, Jonathan and Hardy, Jim},
    title = {Logic as a Vector System},
    journal = {Journal of Logic and Computation},
    volume = {15},
    number = {5},
    pages = {751--765},
    year = {2005},
    month = {10},
    issn = {0955-792X},
    doi = {10.1093/logcom/exi040},
}

@inproceedings{Palmigiano2024,
  author    = {Palmigiano, Alessandra and Panettiere, Mattia and Switrayni, Ni Wayan},
  title     = {Correspondence Theory on Vector Spaces},
  booktitle = {Logic, Language, Information, and Computation},
  editor    = {Metcalfe, George and Studer, Thomas and de Queiroz, Ruy},
  publisher = {Springer Nature Switzerland},
  address   = {Cham},
  pages     = {140--156},
  year      = {2024}
}

@article{Church1940,
  author  = {Church, Alonzo},
  title   = {A Formulation of the Simple Theory of Types},
  journal = {Journal of Symbolic Logic},
  volume  = {5},
  number  = {2},
  pages   = {56--68},
  year    = {1940}
}

@book{Bishop2006,
  author    = {Bishop, Christopher M.},
  title     = {Pattern Recognition and Machine Learning},
  publisher = {Springer},
  address   = {New York},
  year      = {2006}
}

@book{Lenci2023,
  title={Distributional semantics},
  author={Lenci, Alessandro and Sahlgren, Magnus},
  year={2023},
  publisher={Cambridge University Press}
}

@article{Pavlick2022,
  title={Semantic structure in deep learning},
  author={Pavlick, Ellie},
  journal={Annual Review of Linguistics},
  volume={8},
  number={1},
  pages={447--471},
  year={2022},
  publisher={Annual Reviews}
}

@book{Conway2007,
  author    = {Conway, John B.},
  title     = {A Course in Functional Analysis},
  series    = {Graduate Texts in Mathematics},
  volume    = {96},
  publisher = {Springer},
  address   = {New York},
  year      = {2007},
  doi       = {10.1007/978-1-4757-4383-8}
}

@article{Hurst2024,
title = {Continuous and discrete proportion elicit different cognitive strategies},
journal = {Cognition},
volume = {252},
pages = {105918},
year = {2024},
issn = {0010-0277},
doi = {https://doi.org/10.1016/j.cognition.2024.105918},
author = {Michelle A. Hurst and Steven T. Piantadosi},
}

@misc{Caplan2025,
  author  = {Caplan, Spencer and Durvasula, Karthik},
  title   = {The Discrete Perception of Continuous Speech},
  year    = {2025},
  note    = {PsyArXiv preprint},
  doi     = {10.31234/osf.io/s4ch3}
}

@article{Zhang2023,
  author  = {Zhang, Huanqiu and Rich, Patrick D. and Lee, Albert K. and Sharpee, Tatyana O.},
  title   = {Hippocampal Spatial Representations Exhibit a Hyperbolic Geometry that Expands with Experience},
  journal = {Nature Neuroscience},
  volume  = {26},
  pages   = {131--139},
  year    = {2023}
}

@article{Mitchell2008,
  author  = {Mitchell, Tom M. and Shinkareva, Svetlana V. and Carlson, Andrew and Chang, Kai-Min and Malave, Vicente L. and Mason, Robert A. and Just, Marcel Adam},
  title   = {Predicting Human Brain Activity Associated with the Meanings of Nouns},
  journal = {Science},
  volume  = {320},
  number  = {5880},
  pages   = {1191--1195},
  year    = {2008}
}

@article{Hirsch2018,
 ISSN = {00224812, 19435886},
 author = {Hirsch, Robin and Reynolds, Mark},
 journal = {The Journal of Symbolic Logic},
 number = {3},
 pages = {829--867},
 publisher = {[Association for Symbolic Logic, Cambridge University Press]},
 title = {The Temporal Logic of Two Dimensional Minkowski Spacetime is Decidable},
 volume = {83},
 year = {2018}
}

@misc{Lewis2026,
      title={Some Results on Causal Modalities in General Spacetimes}, 
      author={Marco Lewis and Nesta van der Schaaf},
      year={2026},
      eprint={2601.14029},
      archivePrefix={arXiv},
      primaryClass={math.LO},
      url={https://arxiv.org/abs/2601.14029}, 
}

@phdthesis{quigley2025neurosymbolic,
  author  = {Quigley, Daniel},
  title   = {Neurosymbolic Semantics},
  school  = {University of Wisconsin-Milwaukee},
  year    = {2025},
  note    = {Advisor: Nicholas Fleisher}
}

@incollection{Venema2008,
	author = {Yde Venema},
	booktitle = {The Blackwell Guide to Philosophical Logic},
	editor = {Lou Goble},
	pages = {203--223},
	publisher = {Wiley-Blackwell},
	title = {Temporal Logic},
	year = {2008}
}

@article{ploug2012branching,
  author  = {Ploug, Thomas and {\O}hrstr{\o}m, Peter},
  title   = {Branching Time, Indeterminism, and Tense Logic: Unveiling the Prior-Kripke Letters},
  journal = {Synthese},
  volume  = {188},
  number  = {3},
  pages   = {367--379},
  year    = {2012}
}

@book{prior1957time,
  author    = {Prior, Arthur N.},
  title     = {Time and Modality},
  publisher = {Oxford University Press},
  address   = {Oxford},
  year      = {1957}
}

@article{Kamide2005,
author = {Kamide, Norihiro},
title = {A spatial modal logic with a location interpretation},
journal = {Mathematical Logic Quarterly},
volume = {51},
number = {4},
pages = {331--341},
doi = {https://doi.org/10.1002/malq.200510001},
year = {2005}
}

@article{Simons2006,
	author = {Peter Simons},
	doi = {10.1007/s11229-005-5517-6},
	journal = {Synthese},
	number = {3},
	pages = {443--458},
	title = {The Logic of Location},
	volume = {150},
	year = {2006}
}

@incollection{kontchakov2007spatial,
  author    = {Kontchakov, Roman and Kurucz, Agi and Wolter, Frank and Zakharyaschev, Michael},
  title     = {Spatial Logic + Temporal Logic = ?},
  booktitle = {Handbook of Spatial Logics},
  editor    = {Aiello, Marco and Pratt-Hartmann, Ian and Van Benthem, Johan},
  publisher = {Springer Netherlands},
  address   = {Dordrecht},
  pages     = {497--564},
  year      = {2007}
}

@incollection{vanbenthem2007modal,
  author    = {van Benthem, Johan and Bezhanishvili, Guram},
  title     = {Modal Logics of Space},
  booktitle = {Handbook of Spatial Logics},
  editor    = {Aiello, Marco and Pratt-Hartmann, Ian and Van Benthem, Johan},
  publisher = {Springer Netherlands},
  address   = {Dordrecht},
  pages     = {217--298},
  year      = {2007}
}

@article{morales2007new,
  author  = {Morales, Antonio and Navarrete, Isabel and Sciavicco, Guido},
  title   = {A New Modal Logic for Reasoning about Space: Spatial Propositional Neighborhood Logic},
  journal = {Annals of Mathematics and Artificial Intelligence},
  volume  = {51},
  number  = {1},
  pages   = {1--25},
  year    = {2007},
  doi     = {10.1007/s10472-007-9083-0}
}

@InProceedings{walega2019,
  author =	{Wa{\l}\k{e}ga, Przemys{\l}aw Andrzej and Zawidzki, Micha{\l}},
  title =	{{A Modal Logic for Subject-Oriented Spatial Reasoning}},
  booktitle =	{26th International Symposium on Temporal Representation and Reasoning (TIME 2019)},
  pages =	{4:1--4:22},
  series =	{Leibniz International Proceedings in Informatics (LIPIcs)},
  ISBN =	{978-3-95977-127-6},
  ISSN =	{1868-8969},
  year =	{2019},
  volume =	{147},
  editor =	{Gamper, Johann and Pinchinat, Sophie and Sciavicco, Guido},
  publisher =	{Schloss Dagstuhl -- Leibniz-Zentrum f{\"u}r Informatik},
  address =	{Dagstuhl, Germany},
  doi =		{10.4230/LIPIcs.TIME.2019.4},
}

@article{HIRSHFELD20121,
title = {Continuous time temporal logic with counting},
journal = {Information and Computation},
volume = {214},
pages = {1--9},
year = {2012},
issn = {0890-5401},
doi = {https://doi.org/10.1016/j.ic.2011.11.003},
author = {Yoram Hirshfeld and Alexander Rabinovich},
}

@article{Andreoletti2024,
	author = {Giacomo Andreoletti},
	doi = {10.1515/krt-2024-0024},
	journal = {Kriterion ? Journal of Philosophy},
	number = {3-4},
	pages = {139--155},
	publisher = {De Gruyter},
	title = {Branching Time, Fatalism, and Possibilities},
	volume = {38},
	year = {2024}
}

@incollection{allen1997actions,
  author    = {Allen, James F. and Ferguson, George},
  title     = {Actions and Events in Interval Temporal Logic},
  booktitle = {Spatial and Temporal Reasoning},
  editor    = {Stock, Oliviero},
  publisher = {Springer Netherlands},
  address   = {Dordrecht},
  pages     = {205--245},
  year      = {1997}
}

@inproceedings{cardelli2002spatial,
  author    = {Cardelli, Luca and Gardner, Philippa and Ghelli, Giorgio},
  title     = {A Spatial Logic for Querying Graphs},
  booktitle = {Automata, Languages and Programming},
  editor    = {Widmayer, Peter and Eidenbenz, Stephan and Triguero, Francisco and Morales, Rafael and Conejo, Ricardo and Hennessy, Matthew},
  publisher = {Springer},
  address   = {Berlin},
  pages     = {597--610},
  year      = {2002}
}

@inproceedings{cardelli2000anytime,
  author    = {Cardelli, Luca and Gordon, Andrew D.},
  title     = {Anytime, Anywhere: Modal Logics for Mobile Ambients},
  booktitle = {Proceedings of the 27th ACM SIGPLAN-SIGACT Symposium on Principles of Programming Languages (POPL)},
  publisher = {ACM},
  year      = {2000}
}

@article{Rescher1968,
 ISSN = {00224812},
 author = {Nicholas Rescher and James Garson},
 journal = {The Journal of Symbolic Logic},
 number = {4},
 pages = {537--548},
 publisher = {Association for Symbolic Logic},
 title = {Topological Logic},
 volume = {33},
 year = {1968}
}

@incollection{andreka2007logic,
  author    = {Andr{\'e}ka, Hajnal and Madar{\'a}sz, Judit X. and N{\'e}meti, Istv{\'a}n},
  title     = {Logic of Space-Time and Relativity Theory},
  booktitle = {Handbook of Spatial Logics},
  editor    = {Aiello, Marco and Pratt-Hartmann, Ian and Van Benthem, Johan},
  publisher = {Springer Netherlands},
  address   = {Dordrecht},
  pages     = {607--711},
  year      = {2007}
}

@incollection{KURUCZ2007869,
title = {Combining modal logics},
editor = {Patrick Blackburn and Johan {Van Benthem} and Frank Wolter},
series = {Studies in Logic and Practical Reasoning},
publisher = {Elsevier},
volume = {3},
pages = {869--924},
year = {2007},
booktitle = {Handbook of Modal Logic},
issn = {1570-2464},
doi = {https://doi.org/10.1016/S1570-2464(07)80018-8},
author = {Agi Kurucz},
}

@book{gabbay2003many,
  author    = {Gabbay, Dov M. and Kurucz, Agi and Wolter, Frank and Zakharyaschev, Michael},
  title     = {Many-Dimensional Modal Logics: Theory and Applications},
  series    = {Studies in Logic and the Foundations of Mathematics},
  volume    = {148},
  publisher = {Elsevier},
  address   = {Amsterdam},
  year      = {2003}
}

@article{reynolds2001products,
  author  = {Reynolds, Mark and Zakharyaschev, Michael},
  title   = {On the Products of Linear Modal Logics},
  journal = {Journal of Logic and Computation},
  volume  = {11},
  number  = {6},
  pages   = {909--931},
  year    = {2001}
}

@incollection{Kaplan1989,
	author = {David Kaplan},
	booktitle = {Themes From Kaplan},
	editor = {Joseph Almog and John Perry and Howard Wettstein},
	pages = {481--563},
	publisher = {Oxford University Press},
	title = {Demonstratives: An Essay on the Semantics, Logic, Metaphysics and Epistemology of Demonstratives and Other Indexicals},
	year = {1989}
}

@misc{quigley2024categoricalframeworktypedextensional,
      title={Categorical Framework for Typed Extensional and Intensional Models in Formal Semantics}, 
      author={Daniel Quigley},
      year={2024},
      eprint={2408.07058},
      archivePrefix={arXiv},
      primaryClass={math.CT},
      url={https://arxiv.org/abs/2408.07058}, 
}

\end{document}